\newtheorem{theorem}{Theorem}[section]
\newtheorem{lemma}{Lemma}[section]
\newenvironment{proof}[1][Proof]{\textbf{#1.} }{\ \rule{0.5em}{0.5em}}
\newtheorem{PB}{Probl\`{e}me}
\newtheorem{EX}{Exercise}
\newtheorem{EXC}{Exercice}
\newtheorem{so}{Exercice}
\newtheorem{sop}{Probl\`{e}me}
\begin{document}
  \begin{center}
\textbf{\LARGE The $b$-Chromatic Number and \\\vspace{5pt}
$f$-Chromatic Vertex Number  of \\\vspace{5pt} Regular Graphs}
\end{center}
\begin{center}
Amine El Sahili - Hamamache Kheddouci\end{center} \begin{center}
Mekkia Kouider - Maidoun Mortada\footnote{\noindent Corresponding address: Faculty of Sciences-Lebanese University, El Hadas-Beirut, Lebanon\\
 E-mail address: maydoun.mortada@liu.edu.lb\\
Tel: 009613065271- Fax: 009615461496}
\end{center}

 \begin{abstract}
   The $b$-chromatic number of a graph $G$, denoted
 by $b(G)$, is the
largest positive integer $k$ such that there exists a proper
coloring for G with $k$ colors in which every color class contains
at least one vertex adjacent to some vertex in each of the other
color classes, such a vertex is called a dominant vertex.
 The $f$-chromatic vertex number of a $d$-regular graph $G$,
denoted by $f(G)$, is the maximum number of dominant vertices of
distinct colors in a proper coloring with $d+1$ colors. El Sahili
and Kouider  conjectured that $b(G)=d+1$ for any $d$-regular
graph $G$ of girth 5. We study this conjecture by
giving some partial answers under supplementary conditions.\end{abstract}
\section{Introduction}
  For a graph $G=(V,E)$, $V(G)$ and $E(G)$ will denote its
vertex set and edge set, respectively. We denote by $N_G(x)$ the set
of the neighbors of the vertex $x$ in G and by $N^G_2(x)$ the set of
its second neighbors.
 The degree of the vertex $x$ in $G$ is the cardinality of the
set $N_G(x)$ and it is denoted by $d_G(x)$. For short, we use $d(x)$
instead of $d_G(x)$ and $N(x)$ instead of $N_G(x)$. The largest
degree in $G$ is denoted by $\Delta(G)$. A graph $G$ is said to be
$k$-regular if $d(v)= k$ for all $ v \in G$. A proper coloring of a
graph $G$ is a mapping $c:V\rightarrow S$ such that $c(u)\neq c(v)$
whenever $u$ and $v$ are adjacent. The set $S$ is the set of
available colors. A proper coloring with $m$ colors is usually
called an $m$-coloring. The chromatic number of $G$, denoted by $\chi
(G)$, is the smallest integer $m$ such that $G$ has an $m$-coloring.
A color class in a proper coloring of a graph $G$ is the subset of
$V$ which contains all the vertices with same color. A vertex of
color $i$ is said to be a dominating or dominant vertex if it has a
neighbor in each color class distinct from $i$. A color $i$ is said
to be a dominant color in $G$ if there exists a dominant vertex of color
$i$. A proper coloring of a graph is called a $b$-coloring, if each
color class contains at least one dominant vertex. The $b$-chromatic
number of a graph $G$, denoted by $b(G)$, is the largest positive
integer $k$
such that $G$ has a $b$-coloring with $k$ colors.\\
 The concept of the $b$-chromatic has been  introduced by Irving and
Manlove [23] 
 when considering minimal proper colorings with
respect to a partial order defined on the set of all partitions of
the vertices of a graph. They proved that determining  $b(G)$
 is NP-hard for general graphs, but polynomial-time solvable for
trees.\\
 Recently, Kratochvil \emph{et al.} [11]   have shown that
determining $b(G)$ is NP-hard even for bipartite graphs while
Corteel, Valencia-Pabon, and Vera [25] proved that there is no
constant $\epsilon > 0$ for which the $b$-chromatic number can be
approximated within a factor of 120/133-$\epsilon$ in polynomial
time (unless P=NP).\\
 Obviously, each coloring of $G$ with
$\chi(G)$ colors is a $b$-coloring. Also, $b(G)\leq \Delta(G)+1$.
Therefore, for each $d$-regular graph $G$, $b(G)\leq d+1$. Since
$d+1$ is the maximum possible $b$-chromatic number of $d$-regular
graphs, determining necessary or sufficient conditions to achieve
this bound is of interest.
 Hoang and Kouider [7] characterized all bipartite
graphs $G$ and all $P_4$-sparse graphs $G$ such that each induced
subgraph $H$ of $G$ satisfies $b(H) = \chi(H)$. If we are limited to
regular graphs, Kratochvil \emph{et al.} proved in [11] that for a
$d$-regular graph $G$ with at least $d^4$ vertices, $b(G) = d+1$. In
[24], Cabello and Jakovac reduced the previous bound to $2d^3-d^2+d$
and then El Sahili \emph{et al.} [2] showed that $b(G)=d+1$ for a
$d$-regular graph with at least $2d^3 +2d-2d^2$. It was also proved
in [2] that $b(G) = d+1$ for a $d$-regular graph $G$ containing no
cycle of order 4 and  with  at least $d^3 + d$ vertices.  It follows
from the above results that for any $d$, there is only a finite
number of $d$-regular graphs $G$ with $b(G) \leq d$. Kouider [15]
proved that the $b$-chromatic number of a $d$-regular graph of girth
at least 6 is $d+1$. El Sahili and Kouider [1] proved that the
$b$-chromatic number  of any $d$-regular graph of girth 5 that
contains no cycle of order 6 is $d+1$. In [1], El Sahili and
Kouider asked whether it is true that every $d$-regular graph $G$
with girth at least 5 satisfies $b(G) = d+1$.  For cubic graphs, if
their girth is at least $6$ or have at least $81$ vertices, then by
the above their $b$-chromatic number is $4$. Blidia, Maffray and
Zemir [12] showed that the Petersen graph provides a negative answer
to this question since they proved that the $b$-chromatic number of
Petersen graph is 3. They also proved that El Sahili and Kouider
conjecture is true for $d\leq 6$ except for Petersen graph. Cabello
and Jakovac [24] proved that a $d$-regular graph of girth at least 5
has a $b$-chromatic number at least $\lfloor\frac{d+1}{2}\rfloor$.
Besides, they proved that  every $d$-regular graph ($d\geq 6$) that
contains no cycle of order 4 and its diameter is at least $d$, has
$b$-chromatic number $d+1$. S. Shaebani [26] improved this result by
proving that $b(G)=d+1$ for
 a $d$-regular graph $G$ that contains no cycle of order 4 and
$diam(G)\geq 6$. Also, it was shown in [26]  that if $G$ is a
$d$-regular graph that contains no cycle of order 4, then $b(G)\geq
\lfloor\frac{d+3}{2}\rfloor$, and if $G$ has a triangle then
$b(G)\geq \lfloor\frac{d+4}{2}\rfloor$.\\
In this paper we'll study the $b$-chromatic number  for $d$-regular
graphs that contain no cycle of order 4 and for that of girth 5. For
this purpose, we introduce a new parameter, the $f$-chromatic vertex
number of a $d$-regular graph $G$, which is the maximum number of
dominant vertices of distinct colors in a $(d+1)$-coloring of $G$.
It is denoted by $f(G)$. First we prove that $f(G)\leq b(G)$ for
any $d$-regular graph $G$. This result allows us to establish lower
bounds of $b(G)$ by studying $f(G)$ which seems more appropriate. We
improve Shaebani result in case $d$ is even by proving that
$f(G)\geq \lceil\frac{d-1}{2}\rceil+2$ for a $d$-regular graph $G$
that contains no cycle of order 4. Also we prove that $b(G)=d+1$ for
a $d$-regular graph $G$ that contains neither a cycle of order 4 nor
a cycle of order 6, and we provide a condition on the vertices of a
$d$-regular graph $G$ that contains no cycle of order 4 in order for
$b(G)$ to be $d+1$. Finally, we show that
$f(G)\geq\lceil\frac{d-1}{2}\rceil+4$ for a $d$-regular graph $G$ of
girth 5 and diameter 5.
\begin{theorem}
Let $G$ be a $d$-regular graph, then $f(G)\leq b(G)$.
\end{theorem}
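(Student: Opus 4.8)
The plan is to start from a $(d+1)$-coloring that realizes $f(G)$ and then shrink it, one color class at a time, into a genuine $b$-coloring, while never destroying the $f$ dominant colors we begin with. So fix a proper coloring $c$ of $G$ with $d+1$ colors that attains $f=f(G)$ dominant vertices of distinct colors; after relabeling, assume these are vertices $v_1,\dots,v_f$ of colors $1,\dots,f$. Since $f$ is maximal, every color in $\{f+1,\dots,d+1\}$ is non-dominant in this coloring.

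The engine of the argument is a single \emph{elimination step}. Suppose $G$ carries a proper coloring with $k$ colors in which some color $c^\ast$ has no dominant vertex. Because the coloring is proper, the vertices of color $c^\ast$ form an independent set, so they may be recolored simultaneously without conflicting with one another. As no vertex $u$ of color $c^\ast$ is dominant, each such $u$ misses at least one color $a(u)\in\{1,\dots,k\}\setminus\{c^\ast\}$ among its neighbors; recoloring every such $u$ with its $a(u)$ keeps the coloring proper (none of $u$'s neighbors carries color $a(u)$, and no neighbor of $u$ was itself color $c^\ast$) and removes $c^\ast$ entirely, producing a proper $(k-1)$-coloring.

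I would then verify that this step cannot kill a dominant vertex of another color. If $w$ is dominant with $c(w)\neq c^\ast$, then for each surviving color $j\notin\{c^\ast,c(w)\}$ the vertex $w$ has a neighbor of color $j$; that neighbor has color $j\neq c^\ast$, hence is untouched by the recoloring and retains color $j$. Thus $w$ still meets every remaining color class and stays dominant. In particular the colors $1,\dots,f$, being dominant, are never eligible for elimination and so persist. Applying the elimination step repeatedly gives a strictly shortening chain of proper colorings in which $1,\dots,f$ remain present and dominant at every stage; the chain must stop, and it can only stop when no color class lacks a dominant vertex, i.e.\ at a $b$-coloring. This terminal $b$-coloring uses at least $f$ colors, so $b(G)\ge f=f(G)$.

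The only delicate point, and the step I would scrutinize most, is the elimination step itself: one must check that recoloring an entire color class at once remains proper (this is exactly where independence of a color class is used) and that it is harmless to the dominant vertices we wish to preserve (this is where it matters that a dominant vertex's witnessing neighbors avoid the dissolved color and are therefore left fixed). Everything else is bookkeeping; note that $d$-regularity enters only to guarantee that a proper $(d+1)$-coloring, and hence the quantity $f(G)$, exists in the first place.
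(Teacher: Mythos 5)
Your proposal is correct and follows essentially the same route as the paper: starting from a $(d+1)$-coloring attaining $f(G)$, repeatedly dissolve a color class with no dominant vertex by recoloring each of its vertices with a color absent from its neighborhood, observe that previously dominant vertices stay dominant, and stop at a $b$-coloring with at least $f(G)$ colors. You are in fact slightly more explicit than the paper about why the simultaneous recoloring stays proper (independence of the class) and why witnesses of dominance are untouched, but the argument is the same.
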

\begin{proof}
Color the vertices of $G$ by a proper $(d+1)$-coloring $c$. If
$f(G)=d+1$, then $b(G)=f(G)$. Suppose that $f(G)<d+1$, then there
exists a color class, say $i$, containing no dominant vertex. Color
each vertex $v$ of color $i$ by  $j$ such that $j \notin c(N(v))$
and so we get a proper $d$-coloring. Each dominant vertex in the
previous $(d+1)$-coloring is a dominant one in the new $d$-coloring,
so $f(G) $ is less than or equal to the number of dominant vertices
of distinct colors in the new $d$-coloring. If the number of
dominant vertices of distinct colors in the new $d$-coloring is
equal to $d$, then $ b(G)=d$ and so $f(G)\leq b(G)$. Else, we repeat
the elimination of the color classes having no dominant vertices,
one by one, until we reach a $k$-coloring with $k$  dominant
vertices of distinct colors. Thus, $k\leq b(G)$. But each dominant
vertex in the $(d+1)$-coloring is a dominant one in the
$k$-coloring, then $f(G)\leq k\leq b(G)$.
\end{proof}

\section{Regular Graphs With No Cycle of Order $4$}
\noindent Consider a $d$-regular graph $G$ and let $K$ and $F$ be
two disjoint and fixed induced subgraphs of $G$. Suppose that the
vertices of $K$ are colored by a $(d+1)$-coloring and then the
vertices of $F$ are colored by another $(d+1)$-coloring $c$ such
that the two colorings use the same set of colors. An edge of $G$ is
said to be a bad edge if it is incident with two vertices of the
same color. If the color of the two vertices is $i$, then the bad
edge is said to be an $i$-bad edge. We denote by $b_{c}$, the number
of bad edges between vertices in $F$ and others in $K$ that are
resulted from the
coloring $c$.\\
 First, we improve El Sahili and Kouider result [1]
by proving the following:
\begin{theorem}Let G be a d-regular graph, $d\geq 7$,  containing neither a cycle of
order $4$ nor of order $6$. Then, the b-chromatic number of G is $d
+ 1$.
\end{theorem}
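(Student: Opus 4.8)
The plan is to use Theorem 1.1: since $b(G)\le d+1$ always holds and $f(G)\le b(G)$, it suffices to exhibit a proper $(d+1)$-coloring of $G$ with $d+1$ dominant vertices of pairwise distinct colors, i.e. to show $f(G)=d+1$. I would obtain all $d+1$ dominant vertices ``for free'' from a single star: fix any vertex $v$, write $N(v)=\{u_1,\dots,u_d\}$, assign $v$ the color $d+1$ and $u_i$ the color $i$, and aim to make $v$ together with all of $u_1,\dots,u_d$ dominant. Then $v$ is automatically dominant (its neighbors already carry all colors $1,\dots,d$), and $u_i$ (of color $i$) will be dominant precisely when its neighbors other than $v$ realize every color in $\{1,\dots,d\}\setminus\{i\}$, since $u_i$ already sees color $d+1$ at $v$.

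Next I would read off the local structure forced by the two forbidden cycle lengths. Because $G$ has no $C_4$, every second neighbor of $v$ is adjacent to exactly one $u_i$ (a common neighbor of $u_i$ and $u_j$ in $N_2(v)$ together with $v$ would give a $C_4$); hence $N_2(v)$ splits into disjoint blocks $W_i=N(u_i)\cap N_2(v)$. Writing $A_i=\{j: u_j\in N(u_i)\}$ for the colors already supplied to $u_i$ by neighboring $u_j$'s (these come from triangles through $v$, which are permitted), the dominance requirement for $u_i$ becomes: color $W_i$ bijectively onto $T_i:=\{1,\dots,d\}\setminus(\{i\}\cup A_i)$, a set of size $|W_i|=d-1-|A_i|$. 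The crucial structural point, and the place where both hypotheses are used, is that between any two blocks $W_i$ and $W_j$ there is \emph{at most one} edge: two edges sharing an endpoint would create a $C_4$ (through the relevant $u$), while two independent edges $w_1w_1'$ and $w_2w_2'$ with $w_1,w_2\in W_i$ and $w_1',w_2'\in W_j$ would close the $C_6$ $\,u_i w_1 w_1' u_j w_2' w_2 u_i$. Note that edges inside a block, and edges from a block to $N_3(v)$, impose no constraint on this step: the former are satisfied automatically by any rainbow assignment, and the latter involve only vertices to be colored afterwards.

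The heart of the proof is then to realize all the bijections $W_i\to T_i$ simultaneously while keeping the (at most $\binom{d}{2}$) cross-block edges bichromatic. I would color the blocks one at a time; when coloring $W_i$, the only forbidden colors at a vertex $w\in W_i$ are those already placed on its cross-neighbors in previously colored blocks, and the ``at most one edge per pair'' property guarantees each earlier block contributes at most one such constraint. Finding the required bijection is then a matter of verifying Hall's condition for the bipartite ``allowed'' graph between $W_i$ and $T_i$; this is exactly where $d\ge 7$ should enter, providing enough surplus colors that the accumulated forbidden sets can never block a system of distinct representatives. I expect this Hall/counting verification --- controlling the vertices of a block that meet many earlier blocks, and choosing the block order (or the earlier bijections) so that no target set $T_i$ is exhausted --- to be the main obstacle and the part demanding the most careful case analysis.

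Finally, the partial coloring of $N[v]\cup N_2(v)$ extends to all of $G$: processing the remaining vertices in any order, each has at most $d$ already-colored neighbors and hence at most $d$ forbidden colors among the $d+1$ available, so a greedy choice always succeeds. The resulting proper $(d+1)$-coloring has the $d+1$ dominant vertices $v,u_1,\dots,u_d$ of distinct colors, giving $f(G)=d+1$ and therefore $b(G)=d+1$.
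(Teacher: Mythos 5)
Your skeleton coincides with the paper's: both fix a vertex $x$, color $x$ with $d+1$ and its neighbors $x_1,\dots,x_d$ with $1,\dots,d$, record the structural consequences of forbidding $C_4$ and $C_6$, color the blocks $N(x_i)$ one at a time so that every $x_i$ becomes dominant, and finish greedily. But the proposal stops exactly where the proof begins: you yourself flag the simultaneous realization of the bijections $W_i\to T_i$ as ``the main obstacle and the part demanding the most careful case analysis'' and do not carry it out. That step is the entire content of the theorem, so as written this is a plan, not a proof.

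Two concrete problems with the plan as stated. First, the structural fact you extract --- at most one edge between any two blocks $W_i$ and $W_j$ --- is true but too weak: it still permits a single vertex $w\in W_i$ to have one neighbor in each of the previously colored blocks, hence up to $d-1$ forbidden colors, which can exhaust $T_i$ and make the system of distinct representatives fail outright. The fact you actually need (and which the paper uses) is the per-vertex bound: a vertex of $N(x_i)$ has at most one neighbor in $\bigcup_{j\neq i}N(x_j)$, because two such neighbors in the same block $W_j$ close the $C_4$ $w\,w'\,x_j\,w''\,w$, and two in distinct blocks $W_j, W_l$ close the $C_6$ $x\,x_j\,w'\,w\,w''\,x_l\,x$. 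This reduces every vertex of the current block to at most one forbidden color. Second, even with that bound Hall's condition can be tight (a priori every vertex of the current block could be barred from the same color), and the paper does not invoke Hall at all: it takes a coloring of the current block that minimizes the number of bad (monochromatic cross) edges, and shows that if an $i$-bad edge at $u$ survives, then --- by counting that at most $d-2$ (resp.\ $d-3$) vertices of color $i$ in the already-colored part have neighbors in the block --- some block vertex $y$ has no colored neighbor of color $i$, so swapping $i$ with $c(y)$ strictly decreases the number of bad edges, a contradiction. Some such minimality/augmentation device, or a completed Hall verification built on the correct structural lemma, is what your write-up is missing; the rest of your proposal (the reduction via $f(G)\le b(G)$, the block decomposition, the role of $C_6$, the greedy completion) matches the paper.
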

\begin{proof}
Consider a vertex $x$ and its $d$ neighbors $x_1,x_2,..., x_d$.
Since $G$ has no cycle of order 4 then $x_i$ has at most one common
neighbor with $x$ for every $i$, $1\leq i\leq d$, and any two neighbors of $x$ don't have a common
neighbor distinct from $x$. Also, for the same reason, if $x_i$ and
$x_j$ are adjacent then a neighbor of $x_i$ is not adjacent to a
neighbor of $x_j$  for every $i$ and $j$ such that $1\leq i\neq j\leq d$. Besides, a neighbor of $x_i$ has at most one
neighbor in $\cup_{j\neq i}N(x_j)$ since $G$ has neither a cycle of
order 4 nor of order 6 for every $i$, $1\leq i\leq d$.
 Give $x$ the color $d+1$ and each vertex $x_i$ the
color $i$ for $i=1,...,d$. The vertex $x$ is then a dominant vertex. Now we will
color the neighbors of  $x_i$ in such a way that $x_i$ is a dominant
vertex for all $ i,\; 1 \leq  i \leq d$. Color the uncolored
neighbors of $x_1$  in such a way that $x_1$ is a dominant vertex.
Suppose that all the uncolored neighbors of $x_1,...,
x_{k-1},\;1\leq k-1 \leq d-1$, are colored such that $x_i$ is a
dominant vertex for  $i=1 ...
 k-1$. Let $K$ be the subgraph induced by the colored vertices other than $x_k$ and let $F$ be the
 subgraph induced by $x_k$ and its uncolored neighbors. Consider a coloring  $c$  of the uncolored
 vertices in $F$ such that $c(V(F))\subset \{1,...,d\}$, $b_c$ is minimal and $x_k$  has a neighbor in each color
class distinct from $k$. If no bad edge exists, then $x_k$  is a
dominant vertex in a proper coloring of $V (F) \cup V (K)$. Else,
suppose that there exists an $i$-bad edge and let $u$ be the end
vertex of this edge in $F$. Note that $|F\setminus\{u,x_k\}|\geq
d-3$. If $|F\setminus\{u,x_k\}|> d-3$, then since $u$ has a neighbor
of color $i$ in $K$ and at most $d-2$ vertices of color $i$ in $K$
have neighbors in $F$, there exists a vertex $y$ in
$F\setminus\{u,x_k\}$ such that $y$ has no neighbor of color $i$ in
$K$. Then switch $i$ and $c(y)$ in $F$. Otherwise; i.e.
$|F\setminus\{u,x_k\}|= d-3$, then in this case at most $d-3$
vertices of color $i$ in $K$ have neighbors in $F$. Thus  since $u$
has a neighbor of color $i$ in $K$, there exists a vertex $y$ in
$F\setminus\{u,x_k\}$ such that $y$ has no neighbor of color $i$ in
$K$. Then switch $i$ and $c(y)$ in $F$. Thus, in both cases we can
recolor the  neighbors of $x_k$, distinct from $x$, by a new coloring $c'$ such
that $x_k$ has a neighbor in each color class distinct from $k$ and
$b_{c'}< b_c$, a contradiction. Hence, the uncolored neighbors of
$x_k$ can be colored without the appearance of bad edges. Once all
the neighbors of $x_i$, for $i=1,...,d$, are colored, we complete by
giving to each other uncolored vertex a convenient color.
\end{proof}
\begin{theorem}
Let $G$ be a $d$-regular graph, $d\geq 7$, with no cycle of order 4 such that
there exists a vertex $x$ in $G$ with $N(x)=\{x_1,...,x_d\}$ that satisfies the following conditions:\newline\\
1- $|\{v\in N(x_i)$:
 $N(v)\cap (\cup_{j\neq i} N(x_j))\neq\phi\}|\leq \lceil
 \frac{d-1}{2}\rceil-1$, for every $i$, $1\leq i\leq d$,\\
2- $|N(v)\cap (\cup_{j\neq i} N(x_j))|\leq\lceil
\frac{d-1}{2}\rceil-1$ for every $v\in
  N(x_i)$ and for all $i$, $1\leq i\leq d$.\\\\
Then, $b(G)=d+1$.
\end{theorem}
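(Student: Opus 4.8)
The plan is to follow the scheme of Theorem 2.2, rooting the coloring at the distinguished vertex $x$ and replacing the use of the ``no $C_6$'' hypothesis by the two local bounds in conditions 1 and 2. I would give $x$ the color $d+1$ and each $x_i$ the color $i$, so that $x$ is immediately dominant, and then process $x_1,\dots,x_d$ in order, maintaining the invariant that after stage $k$ the vertex $x_k$ is dominant and the coloring stays proper. At stage $k$ let $K$ be the subgraph induced by the colored vertices other than $x_k$, and let $F$ be induced by $x_k$ and its uncolored neighbors. Since $G$ has no $C_4$, a common neighbor of $x_k$ and some $x_j$ ($j\neq k$) would close a $4$-cycle through $x$, and $x_k$ is adjacent to at most one other $x_j$; hence at most one non-$x$ neighbor of $x_k$ is already colored, so $|V(F)\setminus\{x_k\}|\geq d-2$. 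I then choose a coloring $c$ of the uncolored vertices of $F$ with colors in $\{1,\dots,d\}$ making the $d-1$ non-$x$ neighbors of $x_k$ receive pairwise distinct colors covering $\{1,\dots,d\}\setminus\{k\}$ (so $x_k$ is dominant), and for which the number $b_c$ of bad edges between $F$ and $K$ is minimal.

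Assume for contradiction that $b_c>0$, so some $i$-bad edge has $F$-endpoint $u$ and $K$-endpoint $w$, with $i\neq k$ and $u$ a (now colored) neighbor of $x_k$. The idea is to swap the colors $i$ and $c(y)$ on $u$ and a suitable second neighbor $y$ of $x_k$, which keeps $x_k$ dominant (it only transposes two values of the bijection) and removes the edge $(u,w)$. To avoid creating new bad edges I want $y$ to satisfy: $y$ has no color-$i$ neighbor in $K$, and $u$ has no color-$c(y)$ neighbor in $K$. Here the no-$C_4$ hypothesis is used again: no vertex of $F\setminus\{x_k\}$ can be adjacent to $x$ or to any $x_j$ with $j\neq k$, so all neighbors in $K$ of $u$ and of the candidate vertices $y$ lie in $\bigcup_{j\neq k}N(x_j)$, which is exactly where conditions 1 and 2 give control.

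The heart of the argument is the count producing $y$. Let $A$ be the set of neighbors of $x_k$ having a color-$i$ neighbor in $K$; each such vertex has a neighbor in $\bigcup_{j\neq k}N(x_j)$, so condition 1 (applied with index $k$) gives $|A|\leq\lceil\frac{d-1}{2}\rceil-1$, and since $u\in A$ this rules out at most $\lceil\frac{d-1}{2}\rceil-2$ of the remaining candidates. Let $B$ be the set of colors appearing on $N(u)\cap K$; by condition 2 (with $v=u$, index $k$), $|B|\leq\lceil\frac{d-1}{2}\rceil-1$, and because $i\in B$ while $i$ is $u$'s own color (no other vertex of $F$ is colored $i$), the colors of $B$ forbid at most $\lceil\frac{d-1}{2}\rceil-2$ of the candidates. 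As $|V(F)\setminus\{u,x_k\}|\geq d-3$ and $d-3>2\lceil\frac{d-1}{2}\rceil-4$ for every $d\geq 7$, a valid $y$ exists, the swap strictly decreases $b_c$, and the minimality is contradicted. Thus $b_c=0$ is attainable at each stage; after all $x_i$ are processed I extend the partial coloring greedily to a proper $(d+1)$-coloring (possible since $d+1>\Delta(G)$), obtaining a coloring in which $x$ and every $x_i$ is dominant, i.e. a $b$-coloring with $d+1$ colors. With $b(G)\leq d+1$ this gives $b(G)=d+1$.

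The step I expect to be the main obstacle is precisely this final count: the two naive bounds only yield $2\lceil\frac{d-1}{2}\rceil-2$ forbidden candidates against $d-3$ available, which fails by exactly one when $d$ is even. The proof must therefore exploit the automatic exclusions, namely that $u$ itself already belongs to $A$ and that the color $i$ lies in $B$ but labels no admissible $y$, in order to gain the extra unit of room and close the argument.
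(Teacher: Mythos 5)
Your proposal is correct and takes essentially the same approach as the paper: the same root-at-$x$ greedy scheme, the same $K$/$F$ decomposition with a $b_c$-minimal coloring of $F$, and the same switch of $i$ and $c(y)$ justified by conditions 1 and 2. Your explicit count (exploiting $u\in A$ and $i\in B$ to gain the extra unit of room when $d$ is even) is precisely the calculation the paper leaves implicit in its one-line assertion that a suitable $y$ exists.
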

\begin{proof}
Since $G$ has no cycle of order 4, we remark that  $x_i$ and $x_j$
have no common neighbor distinct from $x$, a neighbor of $x_i$ is
adjacent to at most one neighbor of $x_j$, $x_i$ has at most one
common neighbor with $x$
  and if $x_i$ is adjacent to $x_j$ then a neighbor of $x_i$ is not
  adjacent to any of the neighbors of $x_j$  for every $i$ and $j$ such that $1\leq i\neq j\leq d$. Color $x$ by $d+1$ and $x_i$ by $i$ for
$i=1,..., d$. We will color the uncolored neighbors of $x_i$ in a
way that makes $x_i$ a dominant vertex. Suppose that all the
neighbors of $x_1,...,x_{k-1}$, $1\leq k-1\leq d-1$, are colored such that
$x_i$ is a dominant vertex for every $i$, $1\leq i\leq k-1$. Let $K$ be the
subgraph induced by the colored vertices, distinct from $x_k$, and
let $F$ be the subgraph induced by $x_k$ and its uncolored
neighbors. Consider a coloring  $c$  of the uncolored
 vertices in $F$ such that $c(V(F))\subset \{1,...,d\}$, $b_c$ is minimal and $x_k$  has a neighbor  in each color
class distinct from $k$.  If no bad edge exists, then $x_k$  is a
dominant vertex in a proper coloring of $V (F) \cup V (K)$. Else,
suppose that there exists an $i$-bad edge and let $u$ be the end
vertex of this edge in $F$. Since $|N(u)\cap (\cup_{j\neq k}
N(x_j))|\leq\lceil \frac{d-1}{2}\rceil-1$, $|\{v\in N(x_k):
 N(v)\cap (\cup_{j\neq k} N(x_j))\neq \phi\}|\leq \lceil \frac{d-1}{2}\rceil-1$ and $u$ has a neighbor in $K$, then there
exists a vertex $y$ in $F\setminus\{x_k\}$ such that $y$ has no
neighbor of color $i$ in $K$ and $u$ has no neighbor of color $c(y)$
in $K$. Hence, switch $i$ and $c(y)$ in $F$. So, the $i$-bad edge is
removed and so we can recolor the  neighbors of $x_k$, distinct from $x$, by a
new coloring $c'$ such that $x_k$ has a neighbor in each color class
distinct from $k$ and $b_{c'}< b_c$, a contradiction. Thus, the
uncolored neighbors of $x_k$ can be colored without the appearance
of bad edges. Once all the neighbors of $x_i$ are colored we
complete by giving to each other uncolored vertex a convenient color.
\end{proof}\\\\
We have mentioned above that Shaebani  introduced  in [26] a lower
bound for the $b$-chromatic number of $d$-regular graphs with no
cycle of order 4. In the following theorem, we improve Shaebani
bound by 1 in the even case of $d$.
\begin{theorem}
let $G$ be a $d$-regular graph, $d\geq 7$, with no cycle of order 4
then $f(G)\geq \lceil\frac{d-1}{2}\rceil+2$.
\end{theorem}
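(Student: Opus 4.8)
The plan is to adapt the root-and-neighbourhood scheme of Theorems 2.2 and 2.3. I fix a vertex $x$, colour it $d+1$ and colour its neighbours $x_1,\dots,x_d$ by $1,\dots,d$, so that $x$ is already a dominant vertex of colour $d+1$; it then suffices to produce $\lceil\frac{d-1}{2}\rceil+1$ dominant vertices among the $x_k$. I first record the structural consequences of the absence of a $4$-cycle already used above: the sets $N(x_i)\setminus\{x\}$ are pairwise disjoint, each $x_k$ is adjacent to at most one $x_j$, no neighbour of $x_k$ is adjacent to any $x_j$ with $j\neq k$, and any vertex meets each $N(x_j)$ in at most one point. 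I then process $x_1,x_2,\dots$ in order; at stage $k$ I let $K$ be the set of already-coloured vertices (the $x_j$ together with the neighbourhoods of $x_1,\dots,x_{k-1}$, which have already been made dominant) and $F$ the subgraph on $x_k$ and its uncoloured neighbours, and I try to colour $F$ so that $x_k$ sees every colour $\neq k$.

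The engine is exactly the minimal-$b_{c}$ recolouring of Theorem 2.3: take a colouring of the uncoloured neighbours of $x_k$ realising all colours of $\{1,\dots,d\}\setminus\{k\}$ with as few bad edges to $K$ as possible, and if an $i$-bad edge survives with endpoint $u\in F$, remove it by switching colours $i$ and $c(y)$ for a suitable $y\in F$. The point I would exploit is that the two quantities controlling this switch are now bounded \emph{by the number of neighbourhoods already coloured}: since $u$ meets each $N(x_j)$ in at most one vertex and meets no $x_j$, one has $|N(u)\cap K|\le k-1$; and since each already-dominant $x_j$ carries exactly one neighbour of each colour, at most $k-1$ vertices of $K$ of colour $i$ can touch $N(x_k)$. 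Counting the neighbours of $x_k$ blocked from serving as $y$ (those seeing colour $i$ in $K$, and those whose colour $u$ already sees in $K$) leaves at most $2(k-1)-2$ forbidden choices among the $d-2$ candidates $y\neq u$, so a legal switch exists whenever $2(k-1)-2<d-2$, i.e. for $k\le\lceil\frac{d-1}{2}\rceil$ for every $d$ and for $k\le\lceil\frac{d-1}{2}\rceil+1$ when $d$ is odd. Equivalently, I would phrase the colouring of $F$ with distinct colours as a system of distinct representatives and check via Hall's condition that its only possible failure forces $k\ge\frac d2+1$. Either way this already gives $f(G)\ge\lceil\frac{d-1}{2}\rceil+2$ when $d$ is odd, and $f(G)\ge\lceil\frac{d-1}{2}\rceil+1$ when $d$ is even.

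The whole difficulty, and the reason the statement improves the known bound only in the even case, is the single remaining dominant vertex when $d$ is even: at $k=\frac d2+1$ the count above is tight and the required colouring of $F$ can genuinely fail. I would analyse this extremal situation directly, showing that failure forces a set $S$ of exactly $\frac d2$ neighbours of $x_k$ and a set $C$ of exactly $\frac d2$ colours such that every $u\in S$ has, in each of the $\frac d2$ previously coloured neighbourhoods, exactly one neighbour, and these neighbours carry precisely the colours of $C$. This rigidity is what I expect to break, and two routes seem available. First, exploit the slack every earlier stage enjoyed: each $x_j$ with $j\le\frac d2$ was coloured with room to spare, so one of its colour classes can be switched to destroy a forced adjacency between $S$ and a colour-$c$ vertex of $N(x_j)$, restoring Hall's condition at stage $\frac d2+1$ without disturbing the dominance of $x_j$. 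Alternatively, rather than insisting on $x_{d/2+1}$, secure a dominant vertex of a colour in $\{\frac d2+1,\dots,d\}$ elsewhere and colour its (still free) neighbourhood directly, adding a dominant vertex of a colour not yet used by $x,x_1,\dots,x_{d/2}$. Proving that one of these repairs always succeeds, i.e. that the extremal configuration cannot be forced everywhere at once, is the main obstacle, and I expect the no-$4$-cycle condition together with the ``one vertex per colour per neighbourhood'' property to be exactly what rules it out. Once the extra vertex is in hand I extend the partial colouring greedily to a proper $(d+1)$-colouring, which preserves the dominance of every chosen vertex.
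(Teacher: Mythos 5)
Your framework is the same as the paper's (root $x$ coloured $d+1$, neighbours coloured $1,\dots,d$, neighbourhoods of $x_1,x_2,\dots$ processed one at a time with a minimal-bad-edge colouring repaired by switching $i$ and $c(y)$), and your count correctly locates the difficulty: the switch always exists up to stage $\lceil\frac{d-1}{2}\rceil+1$ when $d$ is odd, but the last stage $k=\frac d2+1$ is tight when $d$ is even. The problem is that this last stage is the entire content of the theorem: for odd $d$ the bound $\lceil\frac{d-1}{2}\rceil+2=\lfloor\frac{d+3}{2}\rfloor$ coincides with Shaebani's, and the paper presents the result precisely as an improvement by one \emph{in the even case}. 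At exactly that point your argument stops being a proof. Both proposed repairs are stated as intentions (``I would analyse'', ``I expect \dots to be exactly what rules it out''), neither is carried out, and no argument is given that the rigid extremal configuration cannot occur or that a dominant vertex of a fresh colour can be secured elsewhere. As written you have established only $f(G)\ge\lceil\frac{d-1}{2}\rceil+1$ for even $d$. (A secondary slip: when $x_k$ is adjacent to some other $x_j$, the pool of candidates $y$ has size $d-3$, not $d-2$; the paper compensates by noting that in that situation $N(x_j)$ contributes nothing to $N(u)\cap K$.)

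What the paper does to close the even case, and what your sketch lacks, are two concrete devices. First, since $G$ has no $C_4$, each $x_i$ has at most one neighbour among the other $x_j$, so the edges inside $N(x)$ form a matching; one may therefore \emph{choose} the $\lceil\frac{d-1}{2}\rceil+1$ vertices to be made dominant so that at most one of them has its matching partner outside the chosen set, and process that exceptional vertex first. Your arbitrary order forfeits this, and the selection is what forces, in the tight case $|C_i|=\lceil\frac{d-1}{2}\rceil$ at $k=\lceil\frac{d-1}{2}\rceil+1$, that $x_k$ has no neighbour among the other $x_j$, so that all $d-1$ uncoloured neighbours are available and $i$ is not a dominant colour in $K$. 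Second, in that tight case the colour $k$ of $x_k$ itself is used as the extra degree of freedom: either $u$ already sees colour $k$ in $K$, which removes one colour from the forbidden list for $c(y)$ and restores the strict inequality, or it does not, and one switches colours $i$ and $k$ inside $F$, recolouring $x_k$ itself by $i$ (harmless since all of $x_k$'s neighbours then lie in $F\cup\{x\}$), at the price of a final recolouring of $x_{\lceil\frac{d-1}{2}\rceil+2},\dots,x_d$ to keep $x$ dominant and $x_1$ dominant. Neither of your two speculative routes contains this mechanism, so the gap is genuine.
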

\begin{proof}
Consider a vertex $x$ and its $d$ neighbors $x_1,x_2,..., x_d$. As
above we remark that  $x_i$ and $x_j$ have no common neighbor
distinct from $x$, a neighbor of $x_i$ is adjacent to at most one
neighbor of $x_j$, $x_i$ has at most one common neighbor with $x$
  and if $x_i$ is adjacent to $x_j$ then a neighbor of $x_i$ is not
  adjacent to any of the neighbors of $x_j$ for every $i$ and $j$ such that $1\leq i\neq j\leq d$.
   Hence the set of edges $\{e=uv: uv\in N(x)\}$ is a matching
  and so we may suppose that the vertices  $x_1,\allowbreak
x_2,...,x_{\lceil\frac{d-1}{2}\rceil+1}$ are enumerated in such a
way that at most one of these vertices has a neighbor in the set $\{x_{\lceil\frac{d-1}{2}\rceil+2},...,x_d\}$.
 Without loss of generality, if any of the vertices
in $\{x_1,x_2,...,x_{\lceil\frac{d-1}{2}\rceil+1}\}$ has a neighbor
in $\{x_{\lceil\frac{d-1}{2}\rceil+2},...,x_d\}$ then suppose that
$x_1$ is that vertex. Color $x$ by $d+1$ and $x_i$ by $i$ for
$i=1,...,d$. Now let us color the uncolored neighbors of $x_i$ in a
way that makes $x_i$ a dominant vertex for $i=1,...,\lceil
\frac{d-1}{2}\rceil+1$. Color the uncolored neighbors of $x_1$ in
such a way that $x_1$ is a dominant vertex. Suppose that all the
uncolored neighbors of $x_1$,...,$x_{k-1}$ are colored such that
$x_i$ is a dominant vertex for every $ i, \;1\leq i\leq k\leq \lceil
\frac{d-1}{2}\rceil+1$. Let $K$ be the subgraph induced by the
colored vertices, distinct from $x_k$, and let $F$ be the subgraph
induced by $x_k$ and its uncolored neighbors. Color the uncolored
neighbors of $x_k$
 by a coloring $c$ such that $c(V(F))\subset\{1,...,d\}$, $b_c$ is minimal and $x_k$  has a neighbor in each color
class distinct from $k$. If no bad edge exists, then $x_k$  is a
dominant vertex in a proper coloring of $V(F)\cup V(K)$. Else,
suppose that there exists an $i$-bad edge and let $u$ be the end
vertex of this edge in $F$. Let $C_i$ be the set of vertices of
color $i$ in $K$ that have neighbors in $F$, $t$ be the number of
vertices in $F$, $A$ be the set of vertices in $F$, distinct from
$x_k$, that have no neighbor of color $i$ in $K$ and let $s=|A|$.
Note that $|C_i|\leq \lceil\frac{d-1}{2}\rceil$. We will study two
cases:\begin{enumerate}
\item $|C_i|\leq \lceil\frac{d-1}{2}\rceil -1$\\
If $t=d$ then $s\geq (d-1)-|C_i|\geq \lfloor\frac{d+1}{2}\rfloor$.
But $|c(N(u)\cap K)\setminus\{i\}|\leq \lceil\frac{d-1}{2}\rceil-1$,
then there exists a vertex $y$ in $A$ such that $u$ has no neighbor
of color $c(y)$ in $K$. Hence, switch $i$ and $c(y)$ in $F$.
Otherwise; i.e. $t=d-1$, then $|c(N(u)\cap K)\setminus\{i\}|\leq
\lceil\frac{d-1}{2}\rceil-2$ and $s\geq (d-2)- |C_i|\geq
\lfloor\frac{d+1}{2}\rfloor-1$. Thus, there exists a vertex $y$ in
$A$ such that $u$ has no neighbor of color $c(y)$ in $K$. Hence,
switch $i$ and $c(y)$ in $F$.
\item $|C_i|= \lceil\frac{d-1}{2}\rceil $\\
Then $k=\lceil\frac{d-1}{2}\rceil+1$, $i$ is not a dominant color in
$K$, $x_k$ has no common neighbor with $x$ and $x_j$ has no common
neighbor with $x$ of color $i$ for $ j=1,...,
\lceil\frac{d-1}{2}\rceil$. In this case, $s\geq
(d-1)-\lceil\frac{d-1}{2}\rceil \geq \lfloor\frac{d+1}{2}\rfloor-1$.
If $u$ has a neighbor of color $k$ in $K$ then $|c(N(u)\cap
K)\setminus\{i,k\}|\leq \lceil\frac{d-1}{2}\rceil-2$ and so there
exists a vertex $y$ in $A$ such that $u$ has no neighbor of color
$c(y)$ in $K$. Hence, switch $i$ and $c(y)$ in $F$. Otherwise; i.e
$u$ has no neighbor of color $k$ in $K$, then switch $i$ and $k$ in
$F$.
\end{enumerate}
Thus, in both cases we can recolor the  neighbors of $x_k$, distinct from $x$,
by a new coloring, say $c'$, such that $x_k$ has a neighbor in each
color class distinct from $k$  and $b_{c'}< b_c$, a contradiction.
Hence, the uncolored neighbors of $x_k$ can be colored without the
appearance of bad edges. Once all the neighbors of $x_i$ for
$i=1,...,\lceil\frac{d-1}{2}\rceil+1$ are colored, recolor $x_{
\lceil\frac{d-1}{2}\rceil+2},...,x_d$ in such a way that makes $x$ a
dominant vertex of color $d+1$ and  keeps $x_1$ a dominant vertex
and then complete by giving to each other uncolored vertex a convenient color.
\end{proof}
\section{Regular Graphs of Girth 5 and Diameter 5}
In what follows, we will establish a lower bound for $b(G)$, where
$G$ is a $d$-regular graph with girth 5 and diameter 5, by giving the following lower bound for $f(G)$:
\begin{theorem} Let $G$ be a
$d$-regular graph, $d\geq 7$, with $g(G)=5$ and $diam(G)= 5$. Then $f(G)\geq
\lceil \frac{d-1}{2}\rceil+4$.
\end{theorem}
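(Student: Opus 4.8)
The plan is to introduce a second ``centre'' far from the first. Since $diam(G)=5$, fix two vertices $x,y$ with $d(x,y)=5$ and write $N(x)=\{x_1,\dots,x_d\}$, $N(y)=\{y_1,\dots,y_d\}$. Because $g(G)=5$, each of these neighbourhoods is independent and the branches $N(x_i)\setminus\{x\}$ (resp. $N(y_j)\setminus\{y\}$) are pairwise disjoint, so the balls $B_2(x)$ and $B_2(y)$ are disjoint; more precisely, since $d(x,y)=5$, a coloured vertex of the $x$-side and a coloured vertex of the $y$-side can be adjacent only when the former lies in $N_2(x)$ (distance $2$) and the latter is a neighbour of $y_1$ at distance $3$ from $x$. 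Around $x$ I would run exactly the construction of the preceding lower-bound theorem (valid here because $g(G)=5$ forbids $C_4$): colour $x$ with $d+1$ and each $x_i$ with $i$, which at once makes $x$ dominant, and then colour the neighbourhoods $N(x_i)$ for $i=1,\dots,\lceil\frac{d-1}{2}\rceil+1$ so that $x_1,\dots,x_{\lceil\frac{d-1}{2}\rceil+1}$ become dominant. This realises the $\lceil\frac{d-1}{2}\rceil+2$ dominant colours $1,\dots,\lceil\frac{d-1}{2}\rceil+1,d+1$, and the whole point is to extract two \emph{more} from the $y$-side.

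Next I would pick two distinct fresh colours $a,b\in\{\lceil\frac{d-1}{2}\rceil+2,\dots,d\}$ (this set has at least two elements for $d\ge 7$). Colour $y$ with $a$ and colour $y_1,\dots,y_d$ with the $d$ distinct colours $\{1,\dots,d+1\}\setminus\{a\}$, giving $y_1$ the colour $b$. As $N(y)$ is independent and sits at distance $4$ from $x$, this assignment is proper and interacts with nothing previously coloured, and it makes $y$ a dominant vertex of colour $a$. It then remains to make $y_1$ dominant of colour $b$: for this I would colour $N(y_1)\setminus\{y\}$, a set of $d-1$ independent vertices, with the $d-1$ colours $\{1,\dots,d+1\}\setminus\{a,b\}$, so that together with $y$ (colour $a$) the vertex $y_1$ sees every colour but $b$.

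The only edges along which the two partial colourings can clash are those between $N_2(x)$ and the distance-$3$ vertices of $N(y_1)$; every other pair of coloured vertices, one from each side, is non-adjacent by the choice $d(x,y)=5$. I would remove these clashes precisely by the bad-edge argument of the preceding theorem: among the colourings $c$ of $F:=N(y_1)\setminus\{y\}$ realising $\{1,\dots,d+1\}\setminus\{a,b\}$ choose one with $b_c$ minimal, and if an $i$-bad edge with endpoint $u\in F$ remains, switch $i$ with $c(y')$ for a vertex $y'\in F$ that has no $i$-neighbour in the coloured set $K$ while $u$ has no $c(y')$-neighbour in $K$, strictly decreasing $b_c$. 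Two girth-$5$ facts feed the counting: no $C_4$ forces each vertex of $N_2(x)$ to be adjacent to at most one vertex of $N(y_1)$, and each already-dominant $x_i$ has exactly one neighbour of every colour, so the colour-$i$ vertices of $K$ meeting $F$ number at most $\lceil\frac{d-1}{2}\rceil+1$; moreover $d(x,y)=5$ forces every interface edge $vu$ (with $v\in N_2(x)$, $u\in F$) to lie on a shortest path $x\,x_i\,v\,u\,y_1\,y$, constraining the branches that $u$ can meet. I expect the \emph{main obstacle} to be exactly this balancing: reproducing the two-case analysis (according to whether the colour count attains its maximum $\lceil\frac{d-1}{2}\rceil+1$) and checking that the supply $|A|$ of switchable vertices of $F$ always exceeds the number of colours forbidden at $u$. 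This is the one place where $g(G)=5$ (rather than merely ``no $C_4$'') together with $diam(G)=5$ is genuinely needed to close the estimate.

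Once $F$ is coloured with no bad edge, $y$ and $y_1$ are dominant with the colours $a$ and $b$, which are distinct from the $\lceil\frac{d-1}{2}\rceil+2$ colours already dominant around $x$. Finally I would extend the partial proper colouring to all of $G$ greedily: every vertex has degree $d$ and $d+1$ colours are available, so a free colour always exists, and since this completion never recolours the closed neighbourhoods of $x,x_1,\dots,x_{\lceil\frac{d-1}{2}\rceil+1},y,y_1$, all of these remain dominant. The resulting $(d+1)$-colouring therefore has at least $\lceil\frac{d-1}{2}\rceil+4$ dominant colours, which gives $f(G)\ge\lceil\frac{d-1}{2}\rceil+4$.
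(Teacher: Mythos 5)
Your overall strategy is the same as the paper's: take $x,y$ with $dist(x,y)=5$, build $\lceil\frac{d-1}{2}\rceil+2$ dominant vertices of distinct colors around $x$ (the construction of the no-$C_4$ theorem, packaged in the paper as a ``$U_x$-coloring''), and then try to add $y$ and one neighbor of $y$ as dominant vertices of two fresh, previously non-dominant colors. The framework, the choice of $\alpha,\beta$ outside the dominant colors of $K$, and the bound $|C_j(i)|\leq\lceil\frac{d-1}{2}\rceil+1$ are all correct and match the paper. The problem is the step you yourself flag as ``the main obstacle'': it is not a routine balancing check, it is the entire content of the theorem, and the simple minimal-$b_c$ switch does \emph{not} close it. Count it out: for an $i$-bad edge at $u\in F=N(y_1)\setminus\{y\}$, the vertices $z\in F\setminus\{u\}$ whose color is absent from $c(N(u)\cap K)$ number at least $\lfloor\frac{d-1}{2}\rfloor-1$ (since $|N(u)\cap K|\leq\lceil\frac{d-1}{2}\rceil+1$), and the vertices of $F$ with no color-$i$ neighbor in $K$ also number at least about $\lfloor\frac{d-1}{2}\rfloor-1$; the sum of these two lower bounds is at most $d-2=|F\setminus\{u\}|$, so the two sets need not intersect and a single transposition of colors in $F$ may be unavailable. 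This is exactly why the paper cannot reuse the proof of its Theorem 2.3 verbatim.

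What the paper does to bridge this gap, and what your proposal is missing, is a genuinely multi-layered escape route: (i) a broader notion of ``removable'' bad edge allowing three-vertex color rotations in $F$ and even recolorings inside $K$ (e.g.\ demoting a dominant color $r$ of $K$ to a non-dominant one by swapping $x_r$ with a neighbor, which is legal only because the unremovability hypotheses force the color-$r$ vertices of $N_2(x)$ to have almost all their neighbors near $y$); (ii) the freedom to try every $y_j$, $j=1,\dots,d$, not just $y_1$, and structural conclusions (Lemmas 3.2--3.5) drawn when \emph{all} of them fail; (iii) if some $y_p$ has all its neighbors saturated with $\lceil\frac{d-1}{2}\rceil+1$ neighbors in $K$, a restart with a different anchor set $V\subset N(x)$ with $|V\cap U|$ prescribed (2 if $d$ even, 1 if $d$ odd), so that degree counting in $K\cup K'$ yields a contradiction or forces enough slack; and (iv) as a last resort, recentering the whole construction at $y$ via a $U'_y$-coloring, where the accumulated degree information guarantees every bad edge is removable. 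None of this is optional: without at least one of these mechanisms the switching argument genuinely fails for the tight cases (e.g.\ $d$ even with $|A_1(i)|=\frac{d}{2}-1$), so as written your proof has a real gap at its decisive step rather than a detail to be checked.
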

\begin{lemma}
Consider a $d$-regular graph $G$ with $g(G)=5$. Then the vertices of
$G$ can be  colored by a $(d+1)$-coloring in such a way that there
exists a vertex $x$ of $G$  such that $x$ and
$\lceil\frac{d-1}{2}\rceil+1$ neighbors of $x$ are dominant
vertices of distinct colors.
\end{lemma}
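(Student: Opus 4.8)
The plan is to adapt the argument of Theorem~2.3 (the bound $f(G)\geq\lceil\frac{d-1}{2}\rceil+2$ for $d$-regular graphs without a cycle of order $4$) to the girth-$5$ setting, where the combinatorics become cleaner. Fix any vertex $x$ with $N(x)=\{x_1,\dots,x_d\}$. Since $g(G)=5$, the set $N(x)$ is independent (a triangle would be a cycle of order $3$), so, unlike the general no-$C_4$ case, there are no edges inside $N(x)$ to manage; moreover the absence of a $C_4$ makes the sets $N(x_i)\setminus\{x\}$ pairwise disjoint and forces $x_i$ to have no common neighbor with $x$. Hence, for the vertex $x_k$ we process, its $d-1$ neighbors other than $x$ are pairwise non-adjacent (else a triangle through $x_k$) and are all still uncolored (a neighbor of $x_k$ adjacent to some $x_j$ would close a $C_4$ through $x$), so the graph $F$ induced by $x_k$ together with these neighbors is a star $K_{1,d-1}$. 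I would begin by coloring $x$ with $d+1$ and each $x_i$ with $i$; then $x$ is automatically dominant, since its neighbors realize every color $1,\dots,d$.

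Next I would make $x_1,\dots,x_{\lceil\frac{d-1}{2}\rceil+1}$ dominant one index at a time. Assuming $x_1,\dots,x_{k-1}$ are done, let $K$ be the subgraph on all currently colored vertices except $x_k$, and choose a coloring $c$ of the leaves of $F$ with $c(V(F))\subseteq\{1,\dots,d\}$ such that $x_k$ has a neighbor in every color class distinct from $k$ and the number $b_c$ of bad edges between $F$ and $K$ is minimal. Because $F$ has $d-1$ leaves and $x_k$ must see all of $\{1,\dots,d\}\setminus\{k\}$, each such color occurs exactly once on the leaves. If $b_c=0$, this $x_k$ is dominant and I move on; otherwise I aim to contradict minimality by a single color swap.

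The core of the proof, and the step I expect to be the main obstacle, is the swap analysis. Suppose an $i$-bad edge $uv$ exists with $u$ a leaf of $F$ and $v\in K$. Let $C_i$ be the color-$i$ vertices of $K$ adjacent to $F$, and let $A$ be the leaves with no color-$i$ neighbor in $K$, $s=|A|$. Two girth-$5$ facts drive the counting: each vertex of $C_i$ meets at most one leaf (two would close a $C_4$ through $x_k$), so $s\geq (d-1)-|C_i|$; and since at most one neighbor of each already-processed $x_j$ can be adjacent to $u$ (again no $C_4$), $u$ has at most $k-1\leq\lceil\frac{d-1}{2}\rceil$ neighbors in $K$, whence $|C_i|\leq\lceil\frac{d-1}{2}\rceil$ and the number of colors other than $i$ on $N(u)\cap K$ is at most $\lceil\frac{d-1}{2}\rceil-1$. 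When $|C_i|\leq\lceil\frac{d-1}{2}\rceil-1$ one gets $s\geq\lfloor\frac{d+1}{2}\rfloor$, which exceeds the number of forbidden colors, so there is $y\in A$ for which $u$ has no neighbor of color $c(y)$ in $K$; swapping the colors of $u$ and $y$ then removes the $i$-bad edge, creates none (by the choice of $y$ and the star structure of $F$), and preserves the set of colors seen by $x_k$, contradicting minimality. The delicate case is the equality $|C_i|=\lceil\frac{d-1}{2}\rceil$, which forces $k=\lceil\frac{d-1}{2}\rceil+1$ and $i$ non-dominant in $K$; here I would split on whether $u$ has a neighbor of color $k$ in $K$, locating a suitable $y$ in the first subcase and otherwise interchanging colors $i$ and $k$ on $F$, exactly as in Theorem~2.3. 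The main simplification relative to Theorem~2.3 is that $g(G)=5$ already forces $|V(F)|=d$ (no common neighbor of $x_k$ with $x$), so the subcase with $d-1$ vertices in $F$ never arises.

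Finally, once $x_1,\dots,x_{\lceil\frac{d-1}{2}\rceil+1}$ are all dominant, I would give every remaining uncolored vertex a color absent from its colored neighbors (possible since each vertex has degree $d$ and there are $d+1$ colors), obtaining a proper $(d+1)$-coloring. In it, $x$ carries color $d+1$ while the dominant neighbors $x_1,\dots,x_{\lceil\frac{d-1}{2}\rceil+1}$ carry the pairwise distinct colors $1,\dots,\lceil\frac{d-1}{2}\rceil+1$; thus $x$ together with these $\lceil\frac{d-1}{2}\rceil+1$ neighbors furnishes the required dominant vertices of distinct colors.
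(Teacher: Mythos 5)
Your overall route is the same as the paper's: the paper proves this lemma by observing the girth-$5$ structural facts and then saying the argument of Theorem~2.3 carries over verbatim, which is exactly what you reconstruct (including the correct observation that the subcase $|V(F)|=d-1$ disappears). However, there is one genuine gap, precisely at the point where you lean on Theorem~2.3 ``exactly'': in the delicate case $|C_i|=\lceil\frac{d-1}{2}\rceil$ with $u$ having no neighbor of color $k$ in $K$, the move ``interchange colors $i$ and $k$ on $F$'' necessarily recolors the \emph{center} $x_k$ from $k$ to $i$ (no leaf carries color $k$, and recoloring $u$ to $k$ without touching $x_k$ would be improper). After that move $x_k$ is still dominant, but of color $i\geq\lceil\frac{d-1}{2}\rceil+2$, and $x$ now has two neighbors of color $i$ (namely $x_i$ and $x_k$) and \emph{none} of color $k$. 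So your closing assertions --- that $x$ is ``automatically dominant, since its neighbors realize every color $1,\dots,d$'' and that the dominant neighbors carry colors $1,\dots,\lceil\frac{d-1}{2}\rceil+1$ --- are false in this case, and the lemma's conclusion about $x$ being dominant is exactly what breaks. This is not a phantom case: the paper's own proof of Theorem~2.3 ends with an explicit extra step, ``recolor $x_{\lceil\frac{d-1}{2}\rceil+2},\dots,x_d$ in such a way that makes $x$ a dominant vertex of color $d+1$,'' which exists precisely to repair this. You need the analogous step; in the girth-$5$ setting it is cheap, because each unprocessed $x_j$ with $j\geq\lceil\frac{d-1}{2}\rceil+2$ has no colored neighbor other than $x$ (an edge inside $N(x)$ would give a triangle, and a colored neighbor in some $N(x_l)$ would give a $C_4$), so one may simply recolor one such $x_j$ (e.g.\ $x_i$) with $k$ to restore all $d$ colors in $N(x)$.

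A secondary, purely expository point: you justify $|C_i|\leq\lceil\frac{d-1}{2}\rceil$ by the bound $|N(u)\cap K|\leq k-1$, which does not imply it, since $C_i$ counts color-$i$ vertices of $K$ adjacent to \emph{any} leaf of $F$, not just to $u$. The bound is nonetheless correct: every such vertex lies in some $N(x_j)\setminus\{x\}$ with $j<k$ (neither $x$ nor any vertex of $N(x)$ can be adjacent to a leaf of $F$ without creating a $C_3$ or $C_4$), and each $N(x_j)\setminus\{x\}$ contains at most one vertex of color $i$, giving $|C_i|\leq k-1\leq\lceil\frac{d-1}{2}\rceil$.
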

\begin{proof}
Let $x$ be a vertex of $G$ with $N(x)=\{x_1,..,x_d\}$. We remark
that $x_i$ and $x_j$ have no common neighbor distinct from $x$, a
neighbor of $x_i$ is adjacent to at most one neighbor of $x_j$ and
$x_i$ has no common neighbor with $x$ since $G$ is of girth 5 for every $i$ and $j$ such that $1\leq i\neq j\leq d$. Thus,
the proof is followed from the proof of Theorem 2.3.
\end{proof}\\
\\Consider a $d$-regular graph $G$, $d\geq 7$, with $g(G)=5$. Let $x$
be a vertex of $G$ and let $N(x)=\{x_1,x_2,...,x_d\}$. Let $U\subset N(x)$ such that
$|U|=\lceil\frac{d-1}{2}\rceil+1$. We define a
\emph{$U_x$-coloring} to be a $(d+1)$-coloring of $N(U)\cup U$ in
such a way that the vertices of $U$ are dominant ones of distinct
colors. By the previous Lemma, the $U_x$-coloring is possible for every $U\subset N(x)$ such that
$|U|=\lceil\frac{d-1}{2}\rceil+1$. Let $y$ be a vertex of $G$ such that $dist(x,y)=5$ and let $N(y)=\{y_1,y_2,...,y_d\}$.  We
remark that $x_i$ and $y_j$ have no common neighbor since $dist(x,y)=5$ for every $i$ and $j$, $1\leq i,j\leq d$. A neighbor
of $x_i$ (resp. $y_j$) is adjacent to at most one neighbor of $y_j$
(resp. $x_i$) for every $i$ and $j$, $1 \leq i, j \leq d$,  since $G$
contains no cycle of order 4. Consider a $U_x$-coloring and  let $K$ be the subgraph
induced by the colored vertices. We will denote by $D_K$ the set of dominant colors in $K$, by
$N_{K}$ the set of non dominant colors in $K$ that are distinct from $c(x)$ and by
$C_j(i)$ the set of vertices of color $i$ in $K$ that have neighbors
in $N(y_j)$, $1\leq i,j\leq d$. Note that if $i\in D_{K}$ then
$|C_j(i)|\leq\lceil
 \frac{d-1}{2}\rceil$ and if $i\in N_{K}$  then
 $|C_j(i)|\leq\lceil\frac{d-1}{2}\rceil+1$. Color  $y$  by $\beta$ and $y_j$ by
$\alpha$, where $\alpha$ and $\beta$ are in $N_{K}$, for some $j$,
$1\leq j\leq d$, and color the uncolored vertices in $N(y_j)$
using the same colors used to color the vertices in $K$ in
such a way that $y_j$ has a neighbor in each color class distinct
from $\alpha$. Let $F$ be the subgraph induced by $y_j$ and its
 neighbors. If  an $i$-bad edge appears between a vertex in $N(y_j)$ and another in $K$ for some $i$, $1\leq i\leq d$, then let $u_j(i)$ be  its end vertex
in $N(y_j)$, $A_j(i)$ be the subset of $(N(y_j)\setminus\{y\})$ such
that $c(A_j(i))\cap c(N(u_j(i))\cap K)=\phi$ and let
$B_j(i)=N(y_j)\setminus\{A_j(i),y\}$. Remark that $|c(N(u_j(i))\cap
K)|\leq \lceil \frac{d-1}{2}\rceil+1$, so $|A_j(i)|\geq
\lfloor\frac{d+1}{2}\rfloor -2$. A bad edge is called a removable
bad edge if it can be eliminated by some switching of colors in such
a way that no other new bad edge appears and $c(y)$ together with $c(y_j)$ remains in
$N_K$ after switching of colors. If the $i$-bad edge is not
removable then each vertex in $A_j(i)$ has a neighbor of color $i$
in $K$, since otherwise switch $c(z)$ and $i$ in $F$, where $z$ is a
vertex in $ A_j(i)$  having no neighbor of color $i$ in $K$. Thus,
if the $i$-bad edge is not removable  and $i\in N_K$ then
$|A_j(i)|\leq \lceil\frac{d-1}{2}\rceil$ since at most
$\lceil\frac{d-1}{2}\rceil+1$ vertices of color $i$  have neighbor
in $N(y_j)$ and $u_j(i)$ has a neighbor of color $i$ in $K$, and if
$i\in D_K$ then $|A_j(i)|\leq \lceil\frac{d-1}{2}\rceil-1$.
\begin{lemma}
If the $i$-bad edge is not removable, then $i\in D_{K}$.
\end{lemma}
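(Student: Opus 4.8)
The plan is to argue by contradiction: suppose the $i$-bad edge is not removable, yet $i\notin D_K$. First I would dispose of the possibility $i=c(x)$. Since each $x_l\in U$ is dominant, its $d-1$ neighbors other than $x$ receive exactly the colors $\{1,\dots,d+1\}\setminus\{l,c(x)\}$, so no vertex of the outer layer $N(U)\setminus(U\cup\{x\})$ is colored $c(x)$; as $u=u_j(i)$ is at distance at least $3$ from $x$ it is not adjacent to $x$ either, hence no $c(x)$-bad edge between $N(y_j)$ and $K$ can occur. Thus $i\notin D_K$ forces $i\in N_K$, and I may invoke the bounds already established: every $z\in A_j(i)$ has a neighbor of color $i$ in $K$, and $|A_j(i)|\le\lceil\frac{d-1}{2}\rceil$.

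The engine of the proof is a distance observation: since $dist(x,y)=5$, $diam(G)=5$ and $K\subseteq B(x,2)$, neither $y$ nor $y_j$ has any neighbor in $K$, so their colors $\beta=c(y)$ and $\alpha=c(y_j)$ are ``free''. Concretely, transposing the colors $i$ and $\beta$ inside $F$ (i.e. $u\mapsto\beta$, $y\mapsto i$) stays proper because $y$ has no $K$-neighbor, preserves the domination of $y_j$ because it merely swaps the colors of two of its neighbors, and keeps $c(y),c(y_j)\in N_K$; it therefore removes the bad edge unless $u$ has a $\beta$-neighbor in $K$. The same applies to the transposition of $i$ and $\alpha$ ($u\mapsto\alpha$, $y_j\mapsto i$). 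Non-removability hence forces $\alpha,\beta\in c(N(u)\cap K)$. Since $\alpha,\beta$ are carried by no vertex of $N(y_j)\setminus\{y\}$, this gives $|B_j(i)|=|c(N(u)\cap K)|-2$, whence $|A_j(i)|=d+1-|c(N(u)\cap K)|\ge d+1-(\lceil\frac{d-1}{2}\rceil+1)=\lceil\frac{d}{2}\rceil$. For $d$ odd this already contradicts $|A_j(i)|\le\lceil\frac{d-1}{2}\rceil$, and the lemma follows.

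For $d$ even the two bounds coincide ($|A_j(i)|=d/2$) and more is needed, so I would upgrade the two transpositions to the length-three color rotations $i\to c(z)\to\beta\to i$ on $(u,z,y)$ and $i\to c(z)\to\alpha\to i$ on $(u,z,y_j)$, for $z\in A_j(i)$; each preserves $y_j$'s coverage and is blocked only if $z$ has a $\beta$- (resp. $\alpha$-) neighbor in $K$. Non-removability then forces every $z\in A_j(i)$, as well as $u$, to carry neighbors of all three colors $i,\alpha,\beta$ in $K$; since girth $5$ forbids two neighbors of $y_j$ from sharing a $K$-neighbor and forbids one vertex of $N(y_j)$ from meeting two neighbors of the same $x_l$, these neighbors land on three distinct $U$-vertices per $z$ and set up an essentially bijective correspondence between $A_j(i)\cup\{u\}$ and $U$. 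The hard part, and the real content of the even case, is to squeeze one further bad-edge-removing recoloring out of this rigid configuration: a rotation of length at least four that routes color $i$ onto $y$ while parking the displaced colors on vertices of $B_j(i)$ that, for girth reasons, cannot all carry the obstructing neighbors. Producing such a rotation, or equivalently exhibiting the forced short cycle whose absence it would require, is where the bookkeeping is most delicate.
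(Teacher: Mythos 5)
Your setup, your elimination of the case $i=c(x)$, and your treatment of the odd case are sound and follow essentially the paper's route: non-removability forces $\alpha,\beta\in c(N(u_j(i))\cap K)$ (else a transposition with $y$ or with $y_j$ kills the bad edge while keeping $c(y),c(y_j)\in N_K$), which pushes $|A_j(i)|$ up to $\lfloor\frac{d+1}{2}\rfloor$, while non-removability together with $|C_j(i)|\le\lceil\frac{d-1}{2}\rceil+1$ and the absence of $C_4$ pushes it down to $\lceil\frac{d-1}{2}\rceil$; for $d$ odd these bounds are incompatible.

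The even case, however, which is the real content of the lemma, is not proved: you derive the rigid configuration ($|A_j(i)|=\frac d2$, every $z\in A_j(i)$ carrying $i$-, $\alpha$- and $\beta$-neighbors in $K$) and then openly defer ``producing such a rotation'' to unspecified bookkeeping. The missing idea is a pigeonhole on $B_j(i)$ rather than on $A_j(i)$. Since $|A_j(i)|=\frac d2$, the set $B_j(i)$ consists of $\frac d2-1$ vertices carrying $\frac d2-1$ distinct colors, none equal to $\alpha$ or $\beta$, whereas only $\frac d2$ colors in total (among them $\alpha$ and $\beta$) lie outside $D_K$; hence some $l\in c(B_j(i))$ is a dominant color of $K$. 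Then $|C_j(l)|\le\lceil\frac{d-1}{2}\rceil=\frac d2$, and one vertex of $C_j(l)$ is already a neighbor of $u_j(i)$ (by the definition of $B_j(i)$), so the remaining at most $\frac d2-1$ cannot cover all $\frac d2$ vertices of $A_j(i)$: some $z\in A_j(i)$ has no $l$-neighbor in $K$. Moreover the at most $\frac d2+1$ vertices of $C_j(i)$ are exhausted by the neighbor of $u_j(i)$ and by $A_j(i)$, so the vertex $v$ of color $l$ in $F$ has no $i$-neighbor in $K$. The three-rotation $u_j(i)\mapsto c(z)$, $z\mapsto l$, $v\mapsto i$ then removes the bad edge without touching $\alpha$ or $\beta$, the desired contradiction. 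Your proposed rotations through $\alpha$ and $\beta$ only extract further constraints on $A_j(i)$ and cannot close the case, because for $d$ even all the counting bounds on $A_j(i)$ are tight simultaneously; the escape route has to pass through a dominant color sitting on $B_j(i)$.
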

\begin{proof}
Suppose  that $i\in N_{K}$, then $\{\alpha,\beta\}\subseteq
c(N(u_j(i))\cap K)$
 since otherwise let $k\in \{\alpha,\beta\}\setminus c(N(u_j(i))\cap K)$ and  switch
  $k$ and $i$ in $F$ and so the $i$-bad edge is removable. Thus,  $|A_j(i)|\geq \lfloor
\frac{d+1}{2}\rfloor$.
 $d$ is even since otherwise there exists $z\in A_j(i)$ such that $z$ has no neighbor of
color $i$ in $K$ and so the $i$-bad edge is removed by switching $i$ and $c(z)$ in $F$. Hence, $|A_j(i)|= \frac{d}{2}$ and  each
vertex in $B_j(i)\setminus\{u_j(i)\}$ has no neighbor of color $i$
in $K$. Since there are only $\frac{d}{2}$ colors (including $\alpha$
and $\beta$) which are not in  $D_{K}$, then there exists $l\in
c(B_j(i))$ such that $l\in D_{K}$. Let $v$ be the vertex in $F$ of
color $l$. Since $|A_j(i)|= \frac{d}{2}$ and $u_j(i)$ has a neighbor
of color $l$ in $K$, there exists at least one vertex in $A_j(i)$,
say $z$, such that $z$ has no neighbor of color $l$ in $K$. Recolor
$u_j(i)$  by $c(z)$, $z$ by $l$ and $v$ by  $i$. Hence the $i$-bad
edge is removable, a contradiction. Thus, $i\in D_{K}$.
\end{proof}
\begin{lemma}
If $d$ is odd and the $i$-bad edge is not removable, then
$|C_j(l)|=\lceil\frac{d-1}{2}\rceil$ for every color $l\in D_{K}$
 and  each vertex $u\in N(y_j)\setminus\{y\}$ such that $c(u)\notin D_K$ has
$\lceil\frac{d-1}{2}\rceil+1$ neighbors in $K$.
\end{lemma}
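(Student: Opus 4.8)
Since $d$ is odd, $\lceil\frac{d-1}{2}\rceil=\frac{d-1}{2}$ and $\lfloor\frac{d+1}{2}\rfloor=\frac{d+1}{2}$, and by Lemma 3.2 we already know $i\in D_K$. The first step I would take is to record a clean upper bound valid for \emph{every} $u\in N(y_j)\setminus\{y\}$: as $dist(x,u)\geq 3$, $u$ has no neighbour in $U\subseteq N(x)$, while the absence of a cycle of order $4$ forces $u$ to have at most one neighbour in $N(x_m)$ for each $x_m\in U$ (two such neighbours would close a $4$-cycle through $y_j$), and the sets $N(x_m)\setminus\{x\}$ are pairwise disjoint. Since $K=U\cup N(U)$ and $|U|=\frac{d+1}{2}$, every such $u$ has at most $\frac{d+1}{2}$ neighbours in $K$. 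This is exactly the value claimed in the second assertion, so for that assertion it only remains to supply the matching lower bound for the vertices coloured in $N_K$.

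Next I would turn the hypothesis ``not removable'' into rigidity. The vertices of $N(y_j)\setminus\{y\}$ carry the $d-1$ colours different from $\alpha,\beta$, each once, and non-removability forces every vertex of $A_j(i)$ to have a neighbour of colour $i$ in $K$. Together with $u_j(i)$ these are $|A_j(i)|+1$ vertices of $N(y_j)$, each having a neighbour in $C_j(i)$, and, by the no-$C_4$ condition, in distinct such neighbours; hence $|A_j(i)|+1\leq|C_j(i)|\leq\frac{d-1}{2}$. As $|A_j(i)|\geq\frac{d-3}{2}$ always, every inequality is an equality: $|A_j(i)|=\frac{d-3}{2}$, $|C_j(i)|=\frac{d-1}{2}$, the set $P$ of vertices of $N(y_j)$ having a neighbour of colour $i$ in $K$ is exactly $A_j(i)\cup\{u_j(i)\}$, and since $|B_j(i)|=\frac{d+1}{2}$ and $|c(N(u_j(i))\cap K)|\leq\frac{d+1}{2}$, the set $c(N(u_j(i))\cap K)=c(B_j(i))$ consists of $\frac{d+1}{2}$ colours avoiding $\alpha$ and $\beta$.

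I would then feed this back into the three-vertex recolouring used in the proof of Lemma 3.2. For $l\in c(B_j(i))\setminus\{i\}$ the colour-$l$ vertex $v_l$ lies in $B_j(i)\setminus\{u_j(i)\}$, so $v_l\notin P$ and $v_l$ has no neighbour of colour $i$ in $K$; thus if some $z\in A_j(i)$ had no neighbour of colour $l$ in $K$, the swap $u_j(i)\to c(z)$, $z\to l$, $v_l\to i$ would remove the bad edge while keeping $y_j$ dominant and $\alpha,\beta\in N_K$, a contradiction. Hence every $z\in A_j(i)$ has a neighbour in $K$ of each of the $\frac{d+1}{2}$ colours of $c(B_j(i))$, and by the upper bound of the first step these are all of its $K$-neighbours; so $z$ has exactly $\frac{d+1}{2}$ neighbours in $K$ and none coloured in $c(A_j(i))$. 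Consequently, for each $l\in c(B_j(i))$ the $\frac{d-1}{2}$ vertices $A_j(i)\cup\{u_j(i)\}$ meet $C_j(l)$ in distinct vertices, giving $|C_j(l)|=\frac{d-1}{2}$; and the second assertion holds for every vertex lying in $A_j(i)$, in particular for the $N_K$-coloured ones there.

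The hard part is the colours not reached by $c(B_j(i))$. Since $|N_K|\leq\frac{d-1}{2}$ whereas $|c(A_j(i))|=\frac{d-3}{2}$, at least one dominant colour is realised inside $A_j(i)$, so $c(B_j(i))$ does not exhaust $D_K$ and I still owe $|C_j(l)|=\frac{d-1}{2}$ for every $l\in D_K\cap c(A_j(i))$; dually, I must still prove the lower bound $\frac{d+1}{2}$ for those $N_K$-coloured vertices that lie in $B_j(i)$. The difficulty is that the simple and three-vertex swaps only move colours among $c(B_j(i))$, because $P=A_j(i)\cup\{u_j(i)\}$ prevents colour $i$ from landing on any vertex of $A_j(i)$; reaching a colour of $c(A_j(i))$ requires relocating colour $i$ all the way into $B_j(i)\setminus\{u_j(i)\}$. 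I therefore plan to use longer alternating recolourings that push colour $i$ out of $u_j(i)$ into $B_j(i)\setminus\{u_j(i)\}$ and then propagate the forced colour permutation, and to argue that non-removability blocks every such chain; this saturation is exactly the two claimed equalities. Carrying out this augmenting-chain argument while preserving the dominance of $y_j$ and the non-dominance of $\alpha,\beta$ throughout is, I expect, the main obstacle.
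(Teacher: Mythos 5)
There is a genuine gap, and it sits exactly where you say the ``hard part'' is. Everything up to and including the three-vertex swap matches the paper's argument: $|A_j(i)|=\lceil\frac{d-1}{2}\rceil-1$, the vertices of $B_j(i)\setminus\{u_j(i)\}$ have no neighbor of color $i$ in $K$, $\{\alpha,\beta\}\cap c(N(u_j(i))\cap K)=\phi$, and every $z\in A_j(i)$ has a neighbor of each color of $c(B_j(i))\setminus\{i\}$ in $K$. But the lemma is then finished not by any augmenting-chain argument, which you leave as an unproved program, but by one more recoloring that you never consider: one that changes the color of $y_j$ itself. If some $l\in c(B_j(i))\setminus\{i\}$ were \emph{not} in $D_K$, recolor $y_j$ by $l$, $u_j(i)$ by $\alpha$, and the color-$l$ vertex of $F$ by $i$; this is legitimate because $\alpha\notin c(N(u_j(i))\cap K)$, the color-$l$ vertex has no color-$i$ neighbor in $K$, $y_j$ has no neighbor in $K$ (as $dist(x,y_j)\geq 4$), $y_j$ remains dominant, and the definition of ``removable'' only requires the new $c(y_j)=l$ to lie in $N_K$ --- which is exactly the hypothesis $l\notin D_K$. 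Hence $c(B_j(i))\subseteq D_K$, and since $|B_j(i)|=\lceil\frac{d-1}{2}\rceil+1=|U|=|D_K|$, in fact $c(B_j(i))=D_K$. Both assertions then fall out at once: every $l\in D_K$ lies in $c(B_j(i))$, so $|C_j(l)|=\lceil\frac{d-1}{2}\rceil$ by your own counting; and every vertex with color outside $D_K$ lies in $A_j(i)$, so it has the $\lceil\frac{d-1}{2}\rceil$ neighbors of colors $c(B_j(i))\setminus\{i\}$ plus one of color $i$.

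Your concluding paragraph also contains a counting error that sends you down the wrong road. You assert that since $|N_K|\leq\frac{d-1}{2}$ and $|c(A_j(i))|=\frac{d-3}{2}$, ``at least one dominant colour is realised inside $A_j(i)$, so $c(B_j(i))$ does not exhaust $D_K$.'' That inference fails: the colors available to $A_j(i)$ are the $\frac{d-1}{2}-2$ colors of $N_K\setminus\{\alpha,\beta\}$ together with $c(x)$, which is exactly $\frac{d-3}{2}$ colors, so $c(A_j(i))$ can (and, by the above, does) avoid $D_K$ entirely. Consequently the residual cases you set out to handle --- dominant colors in $c(A_j(i))$ and $N_K$-colored vertices in $B_j(i)$ --- are empty, and the ``main obstacle'' you anticipate does not need to be overcome; what is needed instead is the single $y_j$-recoloring above.
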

\begin{proof}
By the previous Lemma,
$i\in D_{K}$ and so $|A_j(i)|\leq \lceil \frac{d-1}{2}\rceil-1$. But
$|A_j(i)|\geq \lfloor\frac{d+1}{2}\rfloor-2$, then $|A_j(i)|= \lceil
\frac{d-1}{2}\rceil-1$ and so each vertex in
$B_j(i)\setminus\{u_j(i)\}$ has no neighbor of color $i$ in $K$. Since $|c(N(u_j(i))\cap K)|\leq
\lceil\frac{d-1}{2}\rceil+1$, then $\{\alpha,\beta\}\cap
c(N(u_j(i))\cap K)=\phi$. If there exists $z\in A_j(i)$ such that $z$ has no neighbor of color $l$ in $K$ for
some $l\in  (c(B_j(i))\setminus\{i\})$ then recolor $u_j(i)$ by $c(z)$,
$z$ by $l$ and the vertex of color $l$ in $F$  by $i$ and so the $i$-bad edge is removable, a contradiction.
Thus each vertex $z$ in $A_j(i)$ has a neighbor of color $l$ in
$K$, $\forall\; l\in (c(B_j(i))\setminus\{i\})$.
$l\in D_{K}$ $\forall\; l\in c(B_j(i))$, since otherwise if $l\notin
D_{K}$ for some $l\in(c( B_j(i))\setminus\{i\})$ then recolor $y_j$ by $l$, $i$ by $\alpha$ and the vertex of
color $l$ in $F$ by $i$ and so the $i$-bad edge is removable. Thus
$|C_j(l)|=\lceil \frac{d-1}{2}\rceil$ $\forall\; l\in c(B_j(i))$.
Since $|B_j(i)|= \lceil \frac{d-1}{2}\rceil+1$, then
$|C_j(l)|=\lceil \frac{d-1}{2}\rceil$ for every $ l\in D_{K}$ and each
vertex $z\in N(y_j)\setminus\{y\}$ such that $c(z)\notin D_{K}$ has
$\lceil\frac{d-1}{2}\rceil+1$ neighbors in $K$.
\end{proof}
\begin{lemma}
If $d$ is even and there exists a vertex $v\in N(y_j)$ such that
$c(v)\in N_{K}$ and $v$  has at most $ \frac{d}{2}$
neighbors in $K$ and if the $i$-bad edge is not removable, then
$|C_j(l)|=\frac{d}{2}$ for every color $l\in D_{K}$ except for at
most one color in $D_K$ and  each vertex $u\in N(y_j)\setminus\{y\}$ such
that $c(u)\notin D_K$ or $u=u_j(i)$ has at least $\frac{d}{2}$ neighbors in $K$.
 \end{lemma}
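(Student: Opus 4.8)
The plan is to mirror the structure of the proof of Lemma 3.3, since this is the even-$d$ analogue of that odd-$d$ statement. I assume throughout the hypotheses: $d$ is even, the $i$-bad edge is not removable, and there exists a vertex $v\in N(y_j)$ with $c(v)\in N_{K}$ having at most $\frac{d}{2}$ neighbors in $K$. By the running analysis before Lemma 3.2 we already know $i\in D_K$ (Lemma 3.2), hence $|A_j(i)|\leq \lceil\frac{d-1}{2}\rceil-1=\frac{d}{2}-1$, while the general lower bound $|A_j(i)|\geq \lfloor\frac{d+1}{2}\rfloor-2=\frac{d}{2}-2$ holds. The first step is to pin down $|A_j(i)|$ exactly; unlike the odd case, here the bound leaves two possible values, $\frac{d}{2}-1$ or $\frac{d}{2}-2$, and the existence of the special vertex $v$ of color in $N_K$ with few neighbors must be used to force the analysis through.

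\textbf{Main deductions.}
First I would establish that each vertex in $B_j(i)\setminus\{u_j(i)\}$ has no neighbor of color $i$ in $K$: otherwise the membership in $A_j(i)$ would be violated, since $A_j(i)$ is precisely the set of neighbors of $y_j$ (other than $y$) whose colors avoid $c(N(u_j(i))\cap K)$, and $i\in c(N(u_j(i))\cap K)$. Next, for each color $l\in (c(B_j(i))\setminus\{i\})$, I would argue that every $z\in A_j(i)$ has a neighbor of color $l$ in $K$: if some $z\in A_j(i)$ lacked such a neighbor, then recoloring $u_j(i)$ by $c(z)$, $z$ by $l$, and the $F$-vertex of color $l$ by $i$ removes the $i$-bad edge, contradicting non-removability. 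This is the same three-way swap used in Lemma 3.3. Then I would show that each $l\in c(B_j(i))$ lies in $D_K$ except possibly for one color: if two distinct colors $l,l'\in c(B_j(i))\setminus\{i\}$ both lay in $N_K$, one of them could be used (via the swap recoloring $y_j$ by $l$, $i$ by $\alpha$, and the vertex of color $l$ by $i$) to make the edge removable while keeping both $c(y)$ and $c(y_j)$ in $N_K$; the vertex $v$ with its small neighborhood in $K$ is what guarantees enough room to find the second $N_K$-color available at $y_j$, and this is where hypothesis on $v$ is essential.

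\textbf{Counting to the conclusion.}
Having shown that all colors in $c(B_j(i))$ but at most one are dominant, and that every $z\in A_j(i)$ has a neighbor of color $l$ in $K$ for each such $l$, I would count: $|A_j(i)|+1=\frac{d}{2}$ vertices of $F$ (namely $A_j(i)$ together with $u_j(i)$) all send an edge of color $l$ into $K$, forcing $|C_j(l)|\geq \frac{d}{2}$; combined with the standing bound $|C_j(l)|\leq\lceil\frac{d-1}{2}\rceil=\frac{d}{2}$ for $l\in D_K$, this yields $|C_j(l)|=\frac{d}{2}$ for every dominant $l$ except the one possible exceptional color in $c(B_j(i))$. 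Finally, since $|B_j(i)|=\lceil\frac{d-1}{2}\rceil+1=\frac{d}{2}+1$ exhausts all the dominant colors, the same tightness forces each vertex $u\in N(y_j)\setminus\{y\}$ with $c(u)\notin D_K$, as well as $u_j(i)$ itself, to have exactly $\frac{d}{2}$, hence at least $\frac{d}{2}$, neighbors in $K$.

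\textbf{The main obstacle.}
The delicate point, and the reason the even case needs its own lemma, is the ``except for at most one color'' clause. In the odd case every color in $c(B_j(i))$ turned out dominant with no exception; here the parity gives exactly one extra color slot (there are $\frac{d}{2}$ colors outside $D_K$ counting $\alpha,\beta$), so one $N_K$-color may survive in $c(B_j(i))$ without enabling a removable swap. Handling this one-color slack correctly — showing a \emph{second} $N_K$-color would always permit removal but a single one need not, and that the special low-degree vertex $v$ is what prevents collapsing the count — is the part that requires the most care, and it is precisely where the hypothesis on $v$ enters.
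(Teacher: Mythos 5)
Your outline follows the right general strategy (the swap arguments of Lemma 3.3 plus counting), but it breaks down at exactly the point where the even case genuinely differs from the odd one, namely when $|A_j(i)|=\frac{d}{2}-2$. Your claim that ``each vertex in $B_j(i)\setminus\{u_j(i)\}$ has no neighbor of color $i$ in $K$'' is justified incorrectly and is in fact false in that case. Membership in $B_j(i)$ only says that the vertex's \emph{color} lies in $c(N(u_j(i))\cap K)$; it says nothing about whether the vertex has a neighbor of color $i$ in $K$. The correct justification is a counting one: since $i\in D_K$ we have $|C_j(i)|\leq\frac{d}{2}$, and non-removability forces every vertex of $A_j(i)$, together with $u_j(i)$, to contribute a distinct vertex to $C_j(i)$. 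When $|A_j(i)|=\frac{d}{2}-1$ this exhausts $C_j(i)$ and your claim holds; when $|A_j(i)|=\frac{d}{2}-2$ it leaves room for exactly one further vertex $y'\in B_j(i)\setminus\{u_j(i)\}$ with a neighbor of color $i$ in $K$, and the paper's proof shows such a $y'$ must exist (otherwise every vertex of $A_j(i)$, including $v$, would acquire $\frac{d}{2}+1$ neighbors in $K$, contradicting the hypothesis on $v$ --- this is the actual role of $v$, not the one you describe). Handling $y'$ then requires two further subcases ($c(y')\in N_K$ and $c(y')\in D_K$) with recolorings you do not have: in the first, one must recolor vertices \emph{inside} $K$ (cycling the color classes $i$, $l$, $\alpha$ in $K$ before switching in $F$); in the second, one uses the fact that $v$ misses the color $c(y')$ to run a four-step swap through $v$, $u_j(i)$, $y'$ and the vertex of color $l$.

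Your final count also fails in this case: with $|A_j(i)|=\frac{d}{2}-2$ the set $A_j(i)\cup\{u_j(i)\}$ has only $\frac{d}{2}-1$ elements, so you cannot conclude $|C_j(l)|\geq\frac{d}{2}$ without first proving that $y'$ too has a neighbor of every color $l\in c(B_j(i)\setminus\{y'\})$ in $K$ --- which is precisely what the subcase analysis above delivers. Finally, your account of the ``except for at most one color'' clause is off: the exceptional color is not a second $N_K$-color surviving in $c(B_j(i))$, but either a color of $D_K$ not hit by $c(B_j(i))$ or the color $c(y')$ itself. As written, the proposal proves the lemma only when $|A_j(i)|=\frac{d}{2}-1$ and leaves the harder half open.
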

 \begin{proof}
Since the $i$-bad edge is not removable, then by Lemma 3.2, $i\in D_{K}$ and so
$|A_j(i)|\leq \frac{d}{2}-1$.  Thus $|A_j(i)|$ is either equal
 $ \frac{d}{2}-1$ or $\frac{d}{2}-2$ and so $\frac{d}{2}\leq |N(u_j(i))\cap K|\leq \frac{d}{2}+1$. Assume that
$|A_j(i)|=\frac{d}{2}-1$. Then $|\{\alpha,\beta\}\cap
c(N(u_j(i))\cap K)|\leq 1$ since $|c(N(u_j(i))\cap K)|\leq
\frac{d}{2}+1$. Let $k\in \{\alpha,\beta\}\setminus(c(N(u_j(i))\cap
K)$. Note that non of the vertices of $B_j(i)\setminus\{u_j(i)\}$
has a neighbor of color $i$ in $K$. If there exists $z\in A_j(i)$
such that $z$ has no neighbor of color $l\in c(B_j(i))\setminus\{i\}$ then recolor
$u_j(i)$ by $c(z)$, $z$ by $l$ and the vertex of color $l$ in
$N(y_j)$ by $i$ and so the $i$-bad edge is removable, a
contradiction. Thus each vertex in $A_j(i)$ has a neighbor of color
$l$ in $K$ $\forall\; l\in c(B_j(i))$. Also, $l\in D_{K}$ $\forall\;
l\in c(B_j(i))$, since otherwise if $l\notin D_{K}$ for  some $l\in c(B_j(i))\setminus\{i\}$ then recolor in
$F$ the vertex  of color $k$ by $l$, $i$ by $k$ and the vertex of
color $l$ by $i$ and so the $i$-bad edge is removable. Hence,
$|C_j(l)|= \frac{d}{2}$ for all $l\in D_{K}$ except for at most one
 and each vertex $u\in N(y_j)\setminus\{y\}$ such that
$c(u)\notin D_{K}$ has at least $\frac{d}{2}$ neighbors in $K$. \\
Finally, suppose that $|A_j(i)|=\frac{d}{2}-2$, then
$\{\alpha,\beta\}\cap c(N(u_j(i))\cap K)=\phi$. Assume that non of
the vertices in $B_j(i)\setminus\{u_j(i)\}$ has a neighbor of color
$i$ in $K$. Then $l\in D_{K}$ $\forall \; l\in c(B_j(i))$ and each vertex in
$A_j(i)$ has a neighbor of color $l$ in $K$ $\forall\; l\in c(B_j(i))$, since otherwise the $i$-bad edge is removable.
 Thus $v\in A_j(i)$ and each vertex in $A_j(i)$ got
$\frac{d}{2}+1$ neighbors in $K$, a contradiction. Hence, there
exists a unique vertex in $B_j(i)\setminus\{u_j(i)\}$, say $y'$,
such that $y'$ has a neighbor of color $i$ in $K$. Since the $i$-bad
edge is not removable, then $l\in D_{K}$ and each vertex in $A_j(i)$
has a neighbor in $K$ of color $l$, $\forall\; l\in
c(B_j(i)\setminus\{y'\})$.
We will study the following two cases:\\\\
 1- $c(y')\in N_{K}$.\newline
   If $y'$ has no neighbor of color $l$ for
some $l\in c(B_j(i)\setminus\{y'\})$, then
 recolor the vertices in $K$ of color $i$ by $l$,
that of color $l$ by $\alpha$ and that of color $\alpha$ by $i$, and
then switch $\alpha$ and $c(y')$ in $F$ and so the $i$-bad edge is
removable, a contradiction. Thus
 $y'$ has a neighbor of color $l$ in $K$ $\forall l\in
 c(B_j(i)\setminus\{y'\})$. Hence $|C_j(l)|= \frac{d}{2}$
 $\forall\;l\in c(B_j(i)\setminus\{y'\})$ and each vertex $u\in N(y_j)\setminus\{y\}$ such that
$c(u)\notin D_{K}$ has at least $\frac{d}{2}$ neighbors in $K$.\\\\
2- $c(y')\in D_{K}$.\\
 Since $c(v)\in N_K$ then $v\in A_j(i)$ and since it has at most $\frac{d}{2}$ neighbors in $K$, then $v$ has no neighbor of color $c(y')$ in
 $K$. If there exists $l\in c(B_j(i)\setminus\{y'\})$ such that
 $y'$ has no neighbor of color $l$ in $K$
  then  recolor $v$ by $c(y')$, $u_j(i)$ by $c(v)$, $y'$ by $l$ and the vertex of color $l$ in $F$ by $i$, so the
  $i$-bad edge is removable, a contradiction. Thus
 $y'$ has a neighbor of color $l$ in $K$ for every $ l\in
 c(B_j(i)\setminus\{y'\})$. Hence $|C_j(l)|= \frac{d}{2}$
 for every $l\in c(B_j(i)\setminus\{y'\})$ and each vertex $u\in N(y_j)\setminus\{y\}$ such that
$c(u)\notin D_{K}$ has at least $\frac{d}{2}$ neighbors in
$K$.
\end{proof}\\\\
Decolor now $y_j$ and its neighbors. We introduce the following lemma:
\begin{lemma}
If for all $j$, $1\leq j\leq d$,
$|C_j(l)|=\lceil\frac{d-1}{2}\rceil$ for all $l\in D_K$ except for
at most one color in $D_K$ and if there exists $j$, $1\leq j\leq d$,
such that $y_j$ has a neighbor, distinct from $y$, having at most
$\lceil\frac{d-1}{2}\rceil$ neighbors in $K$, then $f(G)\geq
\lceil\frac{d-1}{2}\rceil+4$.
\end{lemma}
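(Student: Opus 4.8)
The plan is to produce a single $(d+1)$-coloring of $G$ carrying $\lceil\frac{d-1}{2}\rceil+4$ dominant vertices of pairwise distinct colors; by the definition of $f$ this yields $f(G)\geq\lceil\frac{d-1}{2}\rceil+4$. I begin from a $U_x$-coloring, in which the $\lceil\frac{d-1}{2}\rceil+1$ vertices of $U$ are dominant of distinct colors, all lying in $D_K$ (its existence is Lemma 3.1). Retaining the colors $\alpha,\beta\in N_K$ introduced above for $y_j$ and $y$, I aim to adjoin \emph{three} further dominant vertices: $x$ with its color $c(x)$, the vertex $y_j$ with color $\alpha$, and $y$ with color $\beta$, where $j$ is the index furnished by the second hypothesis, so that $y_j$ has a neighbor $w\neq y$ with at most $\lceil\frac{d-1}{2}\rceil$ neighbors in $K$. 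Because $c(x)$ is excluded from $N_K$ and $\alpha,\beta\in N_K$, the three colors $c(x),\alpha,\beta$ are pairwise distinct and distinct from the colors of $U$, so the four families together give $\lceil\frac{d-1}{2}\rceil+4$ dominant vertices of distinct colors.

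Two of the three new dominances come for free. To make $x$ dominant I color its $\lfloor\frac{d-1}{2}\rfloor$ neighbors outside $U$ bijectively with the colors $x$ does not yet see; since $g(G)=5$, none of these neighbors is adjacent to $N(U)$, so only their edges to $x$ are relevant and no bad edge arises. To make $y$ dominant I color $y_1,\dots,y_d$ injectively with $\{1,\dots,d+1\}\setminus\{\beta\}$, with $y_j$ receiving $\alpha$; again $g(G)=5$ ensures that no two of them are adjacent, that none is adjacent to $K$ (which lies within distance $2$ of $x$, while $dist(x,y)=5$), and that none is adjacent to $N(y_j)\setminus\{y\}$ (such an edge would close a $4$-cycle through $y$ and $y_j$), so no bad edge arises and every still-uncolored vertex can be finished greedily with $d+1$ colors at the end. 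The whole difficulty therefore reduces to coloring the $d-1$ neighbors of $y_j$ other than $y$, bijectively with $\{1,\dots,d+1\}\setminus\{\alpha,\beta\}$, so that $y_j$ becomes dominant of color $\alpha$ with no bad edge toward $K$; as $N(U)$ is never recolored, the dominance of $U$ is preserved.

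For this final step I select, among all colorings of $N(y_j)\setminus\{y\}$ making $y_j$ dominant, one with $b_c$ minimal, and I claim $b_c=0$. Otherwise a surviving $i$-bad edge is non-removable, whence $i\in D_K$ by Lemma 3.2. When $d$ is odd, Lemma 3.3 forces every neighbor $u$ of $y_j$ with $c(u)\notin D_K$ to have $\lceil\frac{d-1}{2}\rceil+1$ neighbors in $K$; since $w$ has at most $\lceil\frac{d-1}{2}\rceil$, the $i$-bad edge becomes removable as soon as $w$ can be given a color outside $D_K$, contradicting minimality. When $d$ is even, assigning $w$ a color of $N_K$ places it into the hypothesis of Lemma 3.4, which pins $|C_j(l)|=\frac{d}{2}$ for all dominant $l$ but at most one and forces the rigid incidence pattern between $N(y_j)$ and $K$ recorded in the first hypothesis.

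The crux, and the step I expect to be hardest, is to turn that rigidity into a contradiction and, in both parities, to dispose of the residual possibility that every minimal coloring forces $c(w)\in D_K$. The lever is a count of the edges between $N(y_j)$ and $K$: the absence of $4$-cycles makes each vertex of $K$ have at most one neighbor in $N(y_j)$, so this number equals $\sum_{l}|C_j(l)|$, which the first hypothesis, reinforced by Lemma 3.4, forces to be large, while on the other side $\sum_{z\in N(y_j)}|N(z)\cap K|$ carries the deficit contributed by the light vertex $w$. Exploiting that deficit, I move a free color of $N_K\setminus\{\alpha,\beta\}$ onto $w$ by a short chain of swaps that does not increase $b_c$, reducing the residual case to the situation already handled; the first hypothesis, which saturates all but one of the $|C_j(l)|$, is exactly what lets such a chain close without creating a new bad edge and without promoting $\alpha$ or $\beta$ to a dominant color, and tracking the single unsaturated color together with the exceptional vertex $u_j(i)$ through the subcases of Lemma 3.4 is the delicate point. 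Once $b_c=0$ is secured, $x$, $y_j$ and $y$ are dominant of the distinct new colors $c(x)$, $\alpha$, $\beta$, and with the $\lceil\frac{d-1}{2}\rceil+1$ vertices of $U$ they exhibit $\lceil\frac{d-1}{2}\rceil+4$ dominant vertices of distinct colors, giving $f(G)\geq\lceil\frac{d-1}{2}\rceil+4$.
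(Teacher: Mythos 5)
Your global architecture matches the paper's: start from the $U_x$-coloring, add $x$, $y_j$ (color $\alpha$) and $y$ (color $\beta$) as three further dominant vertices of new colors, observe that the colorings of $N(x)\setminus U$ and of $N(y)$ are unconstrained because $g(G)=5$ and $dist(x,y)=5$, and reduce everything to showing that the bad edges between $N(y_j)$ and $K$ can all be eliminated. Your treatment of the odd case is also essentially the paper's: with $c(w)\in N_K$ arranged in the initial coloring, the light vertex $w$ violates the conclusion of Lemma 3.3, so no unremovable bad edge can exist. (One caveat: you minimize $b_c$ over colorings that need not keep $c(w)\in N_K$, so a minimal coloring may have $c(w)\in D_K$ and Lemma 3.3 then says nothing; the paper sidesteps this by fixing $c(v)\in N_K$ at the outset and arguing by the removable/unremovable dichotomy rather than by minimality.)

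The genuine gap is the even case, which you explicitly defer ("the step I expect to be hardest") and for which your proposed mechanism does not work. Giving $w$ a color of $N_K$ does not "reduce the residual case to the situation already handled": under exactly that hypothesis Lemma 3.4 yields only the rigidity conclusion ($|C_j(l)|=\frac{d}{2}$ for all but at most one $l\in D_K$, and the heavy-incidence pattern), not removability, so there is still a contradiction to be manufactured. Moreover, a local count of the edges between $N(y_j)$ and $K$ for the single index $j$ cannot close the argument; the paper needs the hypothesis for \emph{all} $j$ precisely at this point. From $|C_j(l)|=\frac{d}{2}$ for every $j$ and all but one $l\in D_K$, it extracts two colors $r,t\in D_K$ such that every vertex of color $r$ (resp.\ $t$) in $N_2^K(x)$ has $d-1$ neighbors in $N_2^G(y)$, hence exactly one neighbor inside $K$. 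This saturation is what licenses the decisive move, which is absent from your sketch: a recoloring \emph{inside $K$} (swap the dominant vertex $x_r$ with a vertex of a color $\theta\in N_K$ in its neighborhood) that demotes $r$ (or $i$ itself, if $i\in\{r,t\}$) from $D_K$ without creating bad edges in $K$, after which a three-color rotation in $K$ and a switch in $F$ removes the $i$-bad edge. Your "short chain of swaps that does not increase $b_c$" stays inside $F$ and leaves $D_K$ untouched, so it cannot escape the rigid configuration that Lemma 3.4 certifies; without the demotion step the even case, and hence the lemma, is not established.
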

\begin{proof}
 Without loss of generality suppose
that $y_1$ has a neighbor,  say $v$, such that $v\neq y$ and $v$ has
at most $\lceil\frac{d-1}{2}\rceil$ neighbors in $K$. Color $y_1$
and $y$ by $\alpha$ and $\beta$ respectively, where
$\{\alpha,\beta\}\subseteq N_K$. Color the uncolored neighbors of
$y_1$ by the same colors  used to color the vertices in $K$ in such a way
that $y_1$ has a neighbor in each color class distinct from $\alpha$
and $c(v)\in N_K$. Let $F$ be the subgraph induced by $y_1$ and its
neighbors. If no bad edge appears between $N(y_1)$ and $K$ or if all
the bad edges that appear are removable, then upon removing all the
bad edges when exist, color the uncolored neighbors of $x$ and $y$ in such a way
that $x$ and $y$ are dominant vertices and then continue by giving
each uncolored vertex a convenient color. Otherwise, suppose that an
unremovable $i$-bad edge appears between $F$ and $K$. Since the
$i$-bad edge is unremovable then each vertex in $A_1(i)$ has a
neighbor of color $i$ in $K$ and, by Lemma 3.2, $i\in D_K$.
Then $|A_1(i)|\leq\lceil\frac{d-1}{2}\rceil-1$. If $d$ is odd then
$|A_1(i)|=\lceil\frac{d-1}{2}\rceil-1$ and so each vertex in
$B_1(i)\setminus\{u_1(i)\}$ has no neighbor of color $i$ in $K$ and
$\{\alpha,\beta\}\cap c(N(u_1(i))\cap K)=\phi$. $l\in D_K$ for every $l\in
c(B_1(i))$, since otherwise recolor $u_1(i)$ by $\alpha$, $y_1$ by
$l$ and the vertex of color $l$ in $N(y_1)$ by $i$. Since $c(v)\in
N_k$ then $v\in A_1(i)$ and since $v$ has at most
$\lceil\frac{d-1}{2}\rceil$ neighbors in $K$ then there exists $l\in
c(B_1(i)\setminus\{u_1(i)\})$ such that $v$ has no neighbor of color
$l$ in $K$. Hence, recolor $u_1(i)$ by $c(v)$, $v$ by $l$ and the
vertex of color $l$ in $F$ by $i$ and so the $i$-bad edge is
removable, a contradiction. Thus $d$ is even. Since $|A_1(i)|\geq
\frac{d}{2} -2$, then $|A_1(i)|$ is either
$\frac{d}{2}-1$ or $\frac{d}{2}-2$.
$\{\alpha, \beta\}\nsubseteq c(N(u_1(i))\cap K)$ since $|N(u_1(i))\cap
K|\leq \frac{d}{2}+1$. Let $k\in
\{\alpha,\beta\}\setminus c(N(u_1(i))\cap K)$. In case
$|A_1(i)|=\frac{d}{2}-1$, non of the vertices of
$B_1(i)\setminus\{u_1(i)\}$ has a neighbor of color $i$ in $K$. And
in case $|A_1(i)|=\frac{d}{2}-2$, at most one vertex
in $B_1(i)\setminus\{u_1(i)\}$ has a neighbor of color $i$ in $K$.
The color of each vertex in $B_1(i)$ having no neighbor of color $i$
is in $D_K$, since otherwise recolor $u_1(i)$ by $k$, the vertex of color $k$ by $l$ and that of color $l$ by $i$ in $F$,
and so the $i$-bad edge is removable. Each vertex  $z$ in $A_1(i)$ has a
neighbor of color $l$ in $K$, where $l\in c(B_1(i))$ such that the
vertex of color $l$ in $F$ has no neighbor of color $i$ in $K$,
since otherwise recolor $u_1(i)$ by $c(z)$, the vertex of color $l$ in
$F$ by $i$ and $z$ by $l$. Note that the $i$-bad edge is the only
unremovable bad edge  that appears, since $v$
has at most $\frac{d}{2}$ neighbors in $K$ and $c(v)\in N_K$. Since for every $j$, $1\leq j\leq d$,
$|C_j(l)|=\frac{d}{2}$ for every $l\in D_K$ except for
at most one color in $D_K$, then there exist at least two colors in
$D_{K}$, say $r$ and $t$, such that each vertex of color $r$ (resp.
$t$) in $N_2^K(x)$ has $d-1$ neighbors in $N^G_2(y)$. Thus, these
vertices got their $d$ neighbors with one in $K$ and the others in
$N^G_2(y)$. If $i\notin \{r,t\}$, then without loss of generality
suppose that $r\in c(N(u)\cap K)$ and the vertex of color $r$
  in $N(y_{1})$ has no neighbor of color $i$ in $K$. We are going
  to make the color $r$ a non-dominant color in $K$. Suppose that $x_r$ is the dominant
vertex of color $r$ in $K$. Choose a
  color $\theta$ such that $\theta\in N_K\setminus\{c(y),c(y_1)\}$. Recolor $x_r$ by $\theta$ and the vertex of color $\theta$ in $N(x_r)$ by
  $r$. Thus, $r$ is no more in $D_K$. Note that no $r$-bad edge appears in $K$ since each vertex of color $r$ in $N_2^K(x)$, other
  than the new one, got its $d$ neighbors with one in $K$,
   which is a neighbor of $x$, and the others in $F$.
  Also, no $\theta$-bad edge appears in $K$ since $G$ has no cycle of order $4$. Finally,
  recolor the vertices in $K$ of color $i$
by $r$, that of color $r$ by $k$ and that of color $k$ by $i$. Thus,
the $i$-bad edge  is a removable bad edge, a contradiction. If $i \in \{r, t\}$,
follow the same previous procedures in order to make $i$ a non-dominant color in $K$ and then
switch $i$ and $k$ in $F$. Thus,
the $i$-bad edge  is a removable bad edge, a contradiction.\\Hence,
all the bad edges that appears between $F$ and $K$ are removable.
Once all bad edges are removed, color the uncolored neighbors of $y$
and $x$
  in such a way that $y$ and $x$ are dominant vertices. Then continue coloring by giving each uncolored vertex a convenient color.
  Hence, we obtain at least $\lceil\frac{d-1}{2}\rceil$+4 dominant vertices of distinct colors. Thus, $f(G)\geq
  \lceil\frac{d-1}{2}\rceil+4$.
  \end{proof}\\\\
\textbf{Proof of Theorem 3.1}. Consider two vertices $x$ and $y$ such that
$dist(x,y)=5$. Let $N(x)=\{x_1,x_2,...,x_d\}$ and
$N(y)=\{y_1,y_2,...,y_d\}$.  Consider a
 $U_x$-coloring for $U=\{x_1,...,x_{\lceil\frac{d-1}{2}\rceil+1}\}$ and let $K$ be the subgraph induced by the colored vertices. Let
$F_j$ be the subgraph induced by $y_j$ and its neighbors for $j=1,..., d$.
Suppose that $\forall\; j$, $1\leq j\leq d$,  $y_j$ has a
neighbor, say $v_j$, such that $v_j$ has at most
$\lceil\frac{d-1}{2}\rceil$ neighbors in $K$.
 Color $y_j$ by $\alpha$, for $j=1,...,d$, and $y$ by  $\beta $ where $\{\alpha,\beta\}\subseteq N_K$.
Color the uncolored vertices of $F_j$ by a coloring $c_j$ whose
colors are used to color the vertices in $K$ in such a way
that $y_j$ has a neighbor in each color class distinct from $\alpha$
and $c_j(v_j)\in N_K$. If there exists $j,\; 1\leq j\leq d$, such that
no bad edge appears between $F_j$ and $K$ or any bad edge that
appears between $F_j$ and $K$ is removable, then upon removing all
the bad edges when appear  between $F_j$ and $K$, decolor the vertices in $F_k\setminus\{y\},$ for $
k\neq j$, and then color the uncolored neighbors of $x$ and $y$ in
such a way that $x$ and $y$ are dominant vertices and continue
coloring by giving to each
 vertex a convenient color. Thus $f(G)\geq \lceil\frac{d-1}{2}\rceil +4$. Otherwise,
there exists an unremovable
 bad edge  between $F_j$ and $K$ for $j=1,...,d$, and so  by Lemma 3.3 and Lemma 3.4, $|C_j(l)|=\lceil\frac{d-1}{2}\rceil$ and
 for every $ j$, $1\leq j\leq d$, and for every
color
 $l$ in $D_{K}$ except for at most one color. Thus, by Lemma
3.5, $f(G)\geq \lceil\frac{d-1}{2}\rceil+4$.
From now on, we shall assume that we can't apply the above supposition. Hence,
 there exists $p$, $1\leq p\leq d$, such that each
neighbor of $y_p$, distinct from $y$, has
$\lceil\frac{d-1}{2}\rceil+1$ neighbors in $K$. Let $V\subset N(x)$
such that $|V|=\lceil \frac{d-1}{2}\rceil+1$ with $|V\cap U|=2$  if $d$ is even and $|V\cap U|=1$  if $d$ is odd.
Decolor the vertices in $K$ and $F_j$ for $j=1,..,d$, and then consider a $V_x$-coloring.
Let $K'$ be the subgraph induced by the colored vertices. Color
 $y_p$ and $y$ by $\alpha$ and $\beta$ respectively, where $\alpha$ and $\beta$ are not
  in $D_{K'}$ and distinct from $c(x)$.
  Color the uncolored neighbors of $y_p$ by the same colors used to color the vertices of $K'$ in such a way that
$y_p$ has a neighbor in each color class distinct from $\alpha$. If
no bad edge appears between a vertex in $N(y_p)$ and another
 in $K'$ or if all the bad edges
that appears between $N(y_p)$ and $K$ are removable, then upon removing all the bad edges when exist,
color the uncolored neighbors of $y$ and $x$ in such a way that $x$ and $y$ are dominant vertices
 and finally continue by giving to each uncolored vertex a convenient color.
 Then $f(G)\geq \lceil\frac{d-1}{2}\rceil +4$.
Otherwise, suppose that an $i$-bad edge which is not removable
appears between a vertex $u$ in $N(y_p)$ and another in $K'$. If $d$ is odd, then by Lemma 3.3, each vertex $v\in N(y_p)$ such that $c(v)\notin
D_{K'}$ have $\lceil\frac{d-1}{2}\rceil+1$ neighbors in $K'$, which means that each vertex $v\in N(y_p)$ such that $c(v)\notin
D_{K'}$ have $d$ neighbors in $K\cup K'$, a contradiction. Thus $d$ is even.
  Since each neighbor of $y_p$ can have at most $
\frac{d}{2}$ neighbors in $K'$ then by Lemma 3.4
 each vertex $v\in N(y_p)$ such that $v=u$ or $c(v)\notin
D_{K'}$ got $\frac{d}{2}$ neighbors in $K'$. Hence each vertex $v\in N(y_p)\setminus\{y\}$ such
that $v=u$ or $c(v)\notin D_{K'}$ have $d-1$ neighbors
 in $K\cup K'$.  Let $B$ be the set of these vertices, then $|B|=\frac{d}{2}-1$.
  Decolor the vertices in $K'$ and $F_p$. Let $U'\subset N(y)\setminus\{y_p\}$ such that $|U'|= \lceil \frac{d-1}{2}\rceil+1$.
 Consider a $U'_y$-coloring and let $K''$ be the subgraph
 induced by the colored vertices. Color $y_p$ by $\alpha$ such that $\alpha\notin \{c(v): v\in U'\cup \{y\}\}$.
 Color the uncolored neighbors of $y_p$ by the same colors used to color the vertices
of $K''$ in such a way that $y_p$ has a neighbor in each color class distinct from $\alpha$.
 If no bad edge appears between $K''$ and $F_p$, then
 color the uncolored neighbors of $y$ in such a way that  $y$ is a dominant
vertex and then color $x$ by a color distinct from the color of y
and from that of the dominant vertices in $F_p\cup K''$.
 Color the neighbors of $x$ in such a way that $x$ is a dominant vertex.
Finally, we complete by giving to each uncolored vertex a convenient
color. Thus, $f(G) \geq \lceil \frac{d-1}{2}\rceil+ 4$. Otherwise,
suppose that an $i$-bad edge appears between $F_p$ and $K''$ and let
$u'$ be the end vertex of this bad edge in $F_p$ and let $A_i\subset
N(y_p)\setminus\{y\}$ such that $c(A_i)\cap c(N(u')\cap K'')=\phi$. $u'\notin B$ since $u'$ has a neighbor
in $K''$. Since $u'$ has $\frac{d}{2} +1$ neighbors in
$K$, then $|N(u')\cap K''|\leq  \frac{d}{2}-2$ and so $ |A_i|\geq
\frac{d}{2} +1$. Thus, $ A_i\cap B\neq\phi$. Let $z\in A_i\cap B$ and then switch $i$ and $c(z)$ in $F_p$ and so the
$i$-bad edge is deleted without causing the appearance of another
new bad edge and without changing the color of $y_p$ and $y$. Thus any bad edge that appears between $F_p$ and $K''$ can be
deleted without causing the appearance of another new bad edge and without changing the color of $y_p$ and $y$.
 Once all bad edges are deleted  color $x$ by a color distinct from the color of y
and from that of the dominant vertices in $F_p\cup K''$.
 Color the
uncolored neighbors of $x$ and $y$ in such a way that $x$ and $y$ are dominant vertices.
Finally, we complete by giving to each uncolored vertex a convenient
color and so we obtain $\lceil \frac{d-1}{2}\rceil+ 4$
dominant vertices of distinct colors in a proper $d+1$-coloring of
$G$. $\blacksquare$
 
\end{document}